\newtheorem{theorem}{Theorem}[section]
\newtheorem{lemma}[theorem]{Lemma}
\theoremstyle{definition}
\newtheorem{remark}[theorem]{Remark}
\numberwithin{equation}{section}
\title[The fractional maximal operator on Besov and T--L spaces]
{Smoothing properties of the discrete fractional maximal operator on 
Besov and Triebel--Lizorkin spaces 
}
\author{Toni Heikkinen}
\author{Heli Tuominen}
\newcommand\rn{\mathbb R^n}
\newcommand\n{\mathbb N}
\newcommand\ph{\varphi}
\newcommand\eps{\varepsilon}
\newcommand\M{\operatorname{\mathcal M}}
\providecommand{\ch}[1]{\text{\raise 2pt \hbox{$\chi$}\kern-0.2pt}_{#1}}
\providecommand{\vint}[1]{\mathchoice
          {\mathop{\vrule width 5pt height 3 pt depth -2.5pt
                  \kern -9pt \kern 1pt\intop}\nolimits_{\kern -5pt{#1}}}%
          {\mathop{\vrule width 5pt height 3 pt depth -2.6pt
                  \kern -6pt \intop}\nolimits_{\kern -3pt{#1}}}%
          {\mathop{\vrule width 5pt height 3 pt depth -2.6pt
                  \kern -6pt \intop}\nolimits_{\kern -3pt{#1}}}%
          {\mathop{\vrule width 5pt height 3 pt depth -2.6pt
                  \kern -6pt \intop}\nolimits_{\kern -3pt{#1}}}}
\begin{document}

\thanks{The research was supported by the Academy of Finland, grant no.\ 135561.}

\begin{abstract}
Motivated by the results of Korry 
and Kinnunen and Saksman, 
we study the behaviour of the discrete fractional maximal operator on fractional Haj\l asz spaces, 
Haj\l asz--Besov and Haj\l asz--Triebel--Lizorkin spaces on metric measure spaces. 
We show that the discrete fractional maximal operator maps these spaces to the spaces of the same type with higher smoothness.
Our results extend and unify aforementioned results. We present our results in general setting, but they are new already in the Euclidean case.
\end{abstract}

\keywords{Besov space, fractional maximal function, fractional Sobolev space, Triebel--Lizorkin space, metric measure space}
\subjclass[2010]{42B25, 46E35}  

\date{\today}  
\maketitle

\section{Introduction}
Maximal functions are standard tools in harmonic analysis. They are usually used to estimate absolute size, but recently there has been interest in studying their regularity properties, see 
\cite{AK}, \cite{AP}, \cite{B}, \cite{HM}, \cite{HO}, \cite{HKNT}, \cite{HLNT}, \cite{K}, \cite{KL}, \cite{KLi}, \cite{KT}, 
\cite{Ko2}, \cite{Ko3}, \cite{L}, \cite{L2}, \cite{T}.
A starting point was \cite{K}, where Kinnunen observed that the Hardy-Littlewood maximal operator
is bounded  on $W^{1,p}(\rn)$ for $1<p\le\infty$. In \cite{Ko2} and \cite{Ko3} Korry extended this result by showing that
the maximal operator preserves also fractional Sobolev spaces as well as Besov and Triebel--Lizorkin spaces. 
Another kind of extension was given in \cite{KS}, where Kinnunen and Saksman showed that the
fractional maximal operator $\M_\alpha$, defined by
\[
\M_\alpha u(x)=\sup_{r>0}\frac{r^\alpha}{|B(x,r)|}\int_{B(x,r)}|u(y)|\,dy, 
\]
is bounded from $W^{1,p}(\rn)$ to $W^{1,p^*}(\rn),$ where $p^*=np/(n-\alpha p)$, 
and from $L^p(\rn)$ to $\dot W^{1,q}(\rn)$, where $q=np/(n-(\alpha-1)p)$ and $\dot W^{1,q}(\rn)$ is the homogenous Sobolev space. 
These results indicate that $\M_\alpha$ has similar smoothing properties as the Riesz potential.

It is natural to ask whether these results can be seen as special cases of the behaviour of the fractional maximal operator on Besov and Triebel--Lizorkin spaces.
In this paper we show that this is indeed the case, and that all these results can be obtained
by the same rather simple method.
Instead of the standard fractional maximal operator, we consider its variant, the so-called discrete fractional maximal operator 
$\M_\alpha^*$. This allows us to present our results in a setting of doubling metric measure spaces. In this generality, the standard fractional maximal operator behaves quite badly. Indeed, one can construct spaces, where the fractional maximal function
of a Lipschitz function fails to be continuous, see \cite{B} and \cite{HLNT}.
Since $\M_\alpha^*$ and $\M_\alpha$ are comparable, for practical purposes it does not matter which one we choose.  
The discrete fractional maximal operator was introduced in \cite{KL} and further studied in \cite{KT}, \cite{AK} and \cite{HKNT}.

Among the many possible definitions of Besov and Triebel--Lizorkin spaces, the most suitable for our purposes is the one based on Haj\l asz type pointwise inequalities. This approach, introduced by
Koskela, Yang and Zhou in \cite{KYZ}, provides a new point of view to the classical Besov and Triebel--Lizorkin spaces. On the other hand, it allows these spaces to be defined in the setting of metric measure spaces.

By employing this definition, 
we can prove very general results using only simple ''telescoping'' arguments and Poincar\'e type inequalities. 
As special cases, we obtain versions of the results of Kinnunen and Saksman as well as those of Korry, see Remark \ref{remark: KS} and Theorems \ref{thm: TL} and \ref{thm: Besov}. 
We prove our results in doubling metric measure spaces but they are new even in Euclidean spaces. 
Our main results (Theorems \ref{thm: homog TL} and \ref{thm: homog Besov}) imply that if $\alpha\ge 0$ and $0<s+\alpha<1$, then $\M^*_\alpha$ is bounded from $\dot F^s_{p,q}(\rn)$ to 
$\dot F^{s+\alpha}_{p,q}(\rn)$ for $n/(n+s)<p,q<\infty$ and from $\dot B^s_{p,q}(\rn)$ to 
$\dot B^{s+\alpha}_{p,q}(\rn)$ for $n/(n+s)<p<\infty$, $0<q<\infty$, see Section \ref{section TL} for the definition of Triebel--Lizorkin and Besov spaces.

\section{Preliminaries and notation}
We assume that $X=(X, d,\mu)$ is a metric measure space
equipped with a metric $d$ and a Borel regular outer
measure $\mu$, which satisfies $0<\mu(U)<\infty$ whenever $U$ is nonempty, open and bounded.
We assume that the measure is doubling, that is, there exists a fixed constant
$c_d>0$, called the doubling constant, such that 
\begin{equation}\label{doubling measure}
\mu(B(x,2r))\le c_d\mu(B(x,r))
\end{equation}
for every ball $B(x,r)=\{y\in X:d(y,x)<r\}$. 

The doubling condition implies that
\begin{equation}\label{doubling dimension}
\frac{\mu(B(y,r))}{\mu(B(x,R))}\ge C\Big(\frac rR\Big)^Q 
\end{equation}
for every $0<r\le R$ and $y\in B(x,R)$ for some $C$ and $Q>1$ that only depend on $c_D$. 
In fact, we may take $Q=\log_2c_d$.

For the boundedness of the fractional maximal operator in $L^{p}$, we have to assume, in Theorems \ref{fracM bounded} and \ref{sobo norm}.(b), that the measure  $\mu$ satisfies the lower bound condition
\begin{equation}\label{measure lower bound}
\mu(B(x,r))\ge c_{l}r^{Q}
\end{equation} 
with some constant $c_{l}>0$ for all $x\in X$ and $r>0$.

Throughout the paper, $C$ will denote a positive constant whose value is not
necessarily the same at each occurrence.  

\subsection*{The fractional maximal function}
Let $\alpha\ge0$. 
The fractional maximal function of a locally integrable function $u$ is 
\begin{equation}\label{maximal function}
\M_\alpha u(x)=\sup_{r>0}\,r^{\alpha}\vint{B(x,r)}|u|\,d\mu,
\end{equation}
where $u_B=\vint{B}u\,d\mu=\frac1{\mu(B)}\int_Bu\,d\mu$ is the integral average of  $u$ over $B$.
For $\alpha=0$, we have the usual Hardy-Littlewood maximal function 
\[
\M u(x)=\sup_{r>0}\,\vint{B(x,r)}|u|\,d\mu. 
\]
The following Sobolev type inequality for the fractional maximal operator follows easily from the boundedness of the Hardy-Littlewood maximal operator in $L^{p}$, for the proof, see \cite{EKM}, \cite{GGKK} or \cite{HLNT}.

\begin{theorem}\label{fracM bounded}
Assume that the measure lower bound condition holds. 
If $p>1$ and $0<\alpha<Q/p$,
then there is a constant $C>0$, depending only on 
the doubling constant, constant in the measure lower bound, $p$ and $\alpha$, such that
 \[
 \|\M_{\alpha}u\|_{L^{p^*}(X)}
 \le C\|u\|_{L^p(X)},
 \]
for every $u\in L^{p}(X)$ with $p^*=Qp/(Q-\alpha p)$. 
\end{theorem}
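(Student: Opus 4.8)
The plan is to reduce the claimed Sobolev inequality for $\M_\alpha$ to the classical $L^p\to L^{p^*}$ boundedness of the Hardy--Littlewood maximal operator $\M=\M_0$, using the measure lower bound to trade the factor $r^\alpha$ for a power of $\mu(B(x,r))$. First I would observe that for a fixed ball $B=B(x,r)$, the lower bound \eqref{measure lower bound} gives $r^{Q}\le c_l^{-1}\mu(B)$, hence $r^{\alpha}\le c_l^{-\alpha/Q}\mu(B)^{\alpha/Q}$, so that
\[
r^{\alpha}\vint{B}|u|\,d\mu
\le c_l^{-\alpha/Q}\,\mu(B)^{\alpha/Q}\,\frac1{\mu(B)}\int_B|u|\,d\mu
= c_l^{-\alpha/Q}\,\mu(B)^{\alpha/Q-1}\int_B|u|\,d\mu.
\]
The next step is to split the single integral into a H\"older pairing that isolates an $L^p$ norm and a leftover power of $\mu(B)$: writing $|u|=|u|\cdot 1$ and applying H\"older's inequality with exponents $p$ and $p'$ on $B$ gives $\int_B|u|\,d\mu\le\mu(B)^{1/p'}\big(\int_B|u|^p\,d\mu\big)^{1/p}$. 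Combining,
\[
r^{\alpha}\vint{B}|u|\,d\mu
\le c_l^{-\alpha/Q}\,\mu(B)^{\alpha/Q-1+1/p'}\Big(\int_B|u|^p\,d\mu\Big)^{1/p}.
\]
Now I would choose the bookkeeping so that the exponent of $\mu(B)$ works out to $-1/p^*$: a direct computation with $p^*=Qp/(Q-\alpha p)$ shows $\alpha/Q-1+1/p'=\alpha/Q-1/p=-1/p^{*}$ (using $1/p'=1-1/p$). This is the crux of why the particular value of $p^*$ appears, and checking this identity carefully is the one place the argument could go wrong, though it is only algebra.

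With that exponent pinned down, the right-hand side is $c_l^{-\alpha/Q}\,\mu(B)^{-1/p^{*}}\big(\int_B|u|^p\,d\mu\big)^{1/p}$. To bring in the Hardy--Littlewood maximal function I would raise $|u|^p$ to the power making the bracket match an average: $\big(\int_B|u|^p\,d\mu\big)^{1/p}=\mu(B)^{1/p}\big(\vint{B}|u|^p\,d\mu\big)^{1/p}\le\mu(B)^{1/p}\big(\M(|u|^p)(x)\big)^{1/p}$, and since $\mu(B)^{1/p}\mu(B)^{-1/p^{*}}=\mu(B)^{1/p-1/p^{*}}=\mu(B)^{\alpha/Q}$ is bounded above --- this is the step where I have to be slightly careful, because $\mu(B)$ can be large when $r$ is large, so I should instead keep the factor $\mu(B)^{-1/p^{*}}$ attached differently: more cleanly, from the displayed bound take the supremum over $r>0$ on the left while bounding the right by $c_l^{-\alpha/Q}\big(\sup_{r>0}\mu(B)^{-1}\int_B|u|^p\,d\mu\big)^{1/p}\cdot\big(\sup_r\mu(B)^{p/p-p/p^{*}}\big)$; to avoid this subtlety I would instead argue pointwise as follows. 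Fix $x$ and $r$; if $\mu(B(x,r))$ is controlled we are done, and if not one uses that $\int_B|u|^p\,d\mu\le\|u\|_{L^p(X)}^p$ together with $\mu(B)^{-1/p^*}\to0$; combining the two regimes gives the clean pointwise estimate
\[
\M_\alpha u(x)\le C\,\big(\M(|u|^p)(x)\big)^{1/p^{\sharp}}\quad\text{with } p^{\sharp}=p^*/1,
\]
where the exponent is arranged exactly so that taking $L^{p^*}$ norms and using $\|\M(|u|^p)\|_{L^{p^*/p}(X)}\le C\|\,|u|^p\|_{L^{p^*/p}(X)}=C\|u\|_{L^p(X)}^p$ (valid since $p^*/p>1$, which holds as $p>1$ and $\alpha p<Q$) yields $\|\M_\alpha u\|_{L^{p^*}(X)}\le C\|u\|_{L^p(X)}$.

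To summarize the order of steps: (1) use the measure lower bound to convert $r^\alpha$ into $\mu(B)^{\alpha/Q}$; (2) apply H\"older on each ball to extract $\|u\|_{L^p(B)}$ and a power of $\mu(B)$; (3) verify the exponent identity $\alpha/Q-1/p=-1/p^{*}$, which forces the Sobolev exponent $p^*=Qp/(Q-\alpha p)$; (4) rewrite the ball-$L^p$-norm as $\mu(B)^{1/p}$ times an average and dominate the average by $\M(|u|^p)(x)^{1/p}$, checking that the surviving power of $\mu(B)$ is harmless so that $\M_\alpha u(x)\le C\,\M(|u|^p)(x)^{1/p^*}$ (equivalently $(\M_\alpha u)^{p^*}\le C\,\big(\M(|u|^p)\big)^{p^*/p}$); (5) integrate and invoke the $L^{p^*/p}$-boundedness of the Hardy--Littlewood maximal operator, which applies because $p^*/p>1$. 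The main obstacle is purely organizational: making step (4) rigorous so that the power of $\mu(B)$ left over after the H\"older split is genuinely absorbed into a constant independent of $x$ and $r$; the cleanest route is to not carry $\mu(B)$ to a positive power at all but to note that, by the exponent identity, step (2) already delivers $r^\alpha\vint{B}|u|\,d\mu\le C\,\mu(B)^{-1/p^*}\|u\|_{L^p(B)}=C\big(\vint{B}|u|^p d\mu\big)^{1/p}\mu(B)^{1/p-1/p^*}$ and then observe $1/p-1/p^*=\alpha/Q\ge0$ makes this \emph{increase} in $\mu(B)$, which is the wrong direction --- so in fact the correct bookkeeping keeps all of $\mu(B)^{-1/p^*}$ with the $L^p(B)$ norm and bounds $\|u\|_{L^p(B)}\le\|u\|_{L^p(X)}$ only to handle large balls, using the averaged maximal bound for small balls; splicing these two regimes to get the single pointwise inequality above is where all the care goes, but it is standard (this is exactly the argument in \cite{EKM}, \cite{GGKK}, \cite{HLNT}).
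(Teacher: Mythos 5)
The paper itself gives no proof of Theorem \ref{fracM bounded}; it cites \cite{EKM}, \cite{GGKK}, \cite{HLNT}, where the standard argument is a Hedberg-type pointwise interpolation. For every ball $B=B(x,r)$ one has both $r^\alpha\vint{B}|u|\,d\mu\le r^\alpha\,\M u(x)$ and, by H\"older's inequality together with the lower bound $\mu(B)\ge c_l r^Q$, $r^\alpha\vint{B}|u|\,d\mu\le C r^{\alpha-Q/p}\|u\|_{L^p(X)}$; since $0<\alpha<Q/p$, the first bound increases and the second decreases in $r$, and balancing them gives the pointwise estimate $\M_\alpha u(x)\le C\|u\|_{L^p(X)}^{\alpha p/Q}\bigl(\M u(x)\bigr)^{1-\alpha p/Q}$. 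Because $(1-\alpha p/Q)p^*=p$ and $\alpha p p^*/Q+p=p^*$, raising this to the power $p^*$ and integrating reduces the theorem to the $L^p$-boundedness of the Hardy--Littlewood operator, which is exactly where the hypothesis $p>1$ is used.

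Your proposal aims at this structure but has genuine gaps at the two decisive points. First, the pointwise inequality you assert, $\M_\alpha u(x)\le C\bigl(\M(|u|^p)(x)\bigr)^{1/p^*}$, cannot hold: replacing $u$ by $\lambda u$ scales the left side by $\lambda$ and the right side by $\lambda^{p/p^*}$, so no constant independent of $u$ works; the correct interpolated bound must carry the compensating factor $\|u\|_{L^p(X)}^{\alpha p/Q}$, and the ``splicing of the two regimes'' that is supposed to produce your inequality is never actually carried out (you correctly notice that the leftover factor $\mu(B)^{\alpha/Q}$ is unbounded for large balls, but the asserted outcome of the splice is false as stated, and the parenthetical ``equivalently $(\M_\alpha u)^{p^*}\le C(\M(|u|^p))^{p^*/p}$'' is not equivalent to it). Second, the concluding norm computation is wrong: $\||u|^p\|_{L^{p^*/p}(X)}=\|u\|_{L^{p^*}(X)}^p$, not $\|u\|_{L^p(X)}^p$, so even granting your pointwise claim you would only obtain $\|\M_\alpha u\|_{L^{p^*}(X)}\le C\|u\|_{L^{p^*}(X)}$, which is not the theorem (and $u\in L^p(X)$ need not belong to $L^{p^*}(X)$). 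Moreover, the choice $\M(|u|^p)$ is structurally the wrong one: once the missing factor $\|u\|_{L^p(X)}^{\alpha p/Q}$ is inserted to fix homogeneity, raising the bound to the power $p^*$ leaves $\int_X\M(|u|^p)\,d\mu$, i.e.\ you would need $\M$ to be bounded on $L^1$, which fails; a further warning sign is that your scheme never uses $p>1$. Replace $\M(|u|^p)$ by $\M u$, carry out the optimization over $r$ to get the Hedberg bound above, and the argument closes.
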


\begin{remark} 
If $u$ is only locally integrable, then $\M_\alpha u$ may well be identically infinite. 
However, if $\M_\alpha u(x_0)<\infty$ for some $x_0\in X$, then $\M_\alpha u(x)< \infty$ for 
almost every $x$. This follows from the estimate
\[
r^{\alpha}\vint{B(x,r)}|u|\,d\mu
\le \frac{\mu(B(x_0,r+d(x,x_0)))}{\mu(B(x,r))}\M_\alpha u(x_0)
\]
combined with the doubling condition and the fact that
\[
\lim_{r\to 0}r^\alpha\vint{B(x,r)}|u|\,d\mu<\infty,
\]
whenever $x$ is a Lebesgue point of $u$.
\end{remark}

\subsection*{The discrete fractional maximal function}
We begin the construction of the discrete maximal function with a covering of the space.
Let $r>0$.
Since the measure is doubling, there are balls $B(x_i,r)$, $i=1,2,\dots$, such that 
\[
X=\bigcup_{i=1}^\infty B(x_i,r)
\quad\text{and}\quad
\sum_{i=1}^\infty \chi_{B(x_i,6r)}\le N<\infty,
\]
where $\chi_{B(x_i,6r)}$ is the characteristic function of the ball $B(x_i,6r)$.
This means that the dilated balls $B(x_i,6r)$, $i=1,2,\dots$, are of bounded overlap. 
The constant $N$ depends only on the doubling constant and, 
in particular, it is independent of $r$.

Then we construct a partition of unity subordinate to the covering 
$B(x_i,r)$, $i=1,2,\dots$, of $X$.
Indeed, there is a family of functions $\varphi_i$, $i=1,2,\dots$, such that
$0\le\varphi_i\le1$, 
$\varphi_i=0$ in $X\setminus B(x_i,6r)$,
$\varphi_i\ge\nu$ in $B(x_i,3r)$, 
$\varphi_i$ is Lipschitz with constant $L/r$ 
with $\nu$ and $L$ depending only on the doubling constant, and
\[
\sum_{i=1}^\infty\varphi_i(x)=1
\]
for every $x\in X$.

The discrete convolution of a locally integrable function $u$ at the scale $3r$ is
\[
u_{r}(x)=\sum_{i=1}^{\infty}\ph_{i}(x)u_{B(x_i,3r)}
\]
for every $x\in X$, and we write $u_{r}^{\alpha}=r^{\alpha}u_{r}$.

Let $r_{j}$, $j=1,2,\dots$ be an enumeration of the positive rationals and let balls 
$B(x_{i},r_{j})$, $i=1,2,\dots$ be a covering of $X$ as above.
The discrete fractional maximal function of $u$ in $X$ is
\[
\M^{*}_{\alpha}u(x)=\sup_{j}|u|_{r_j}^{\alpha}(x)
\] 
for every $x\in X$.
For $\alpha=0$, we obtain the Hardy-Littlewood type discrete maximal function $\M^{*}$ 
studied in \cite{KL},  \cite{KT} and \cite{AK}.
The discrete fractional maximal function is easily seen to be comparable to the standard fractional maximal function, see \cite{HKNT}.

\section{Fractional Haj\l asz spaces}\label{section M}
Let $u$ be a measurable function and let $s\ge 0$. A nonnegative measurable function $g$ is 
an $s$-Haj\l asz gradient of $u$ 
if there exists $E\subset X$ with $\mu(E)=0$ such that for all $x,y\in X\setminus E$,
\begin{equation}\label{eq: gradient}
|u(x)-u(y)|\le d(x,y)^s(g(x)+g(y)).
\end{equation}
The collection of all $s$-Haj\l asz gradients of $u$ is denoted by $\mathcal{D}^s(u)$.
A homogeneous Haj\l asz space $\dot{M}^{s,p}(X)$ consists of measurable functions $u$ such that
\[
\|u\|_{\dot M^{s,p}(X)}=\inf_{g\in\mathcal{D}^s(u)}\|g\|_{L^p(X)}
\]
is finite. The  Haj\l asz space $M^{s,p}(X)$ is $\dot M^{s,p}(X)\cap L^p(X)$ equipped with the norm
\[
\|u\|_{M^{s,p}(X)}=\|u\|_{L^p(X)}+\|u\|_{\dot M^{s,p}(X)}.
\]
The space $M^{1,p}(X)$, a counterpart of a Sobolev space in metric measure space, was introduced in \cite{H}, see also \cite{H2}. The fractional spaces $M^{s,p}(X)$ were introduced in \cite{Y} and studied for example in \cite{Hu} and \cite{HKT}. Notice that $M^{0,p}(X)=L^p(X)$. 

The pointwise definition of the Haj\l asz spaces implies the validity of Sobolev-Poincar\'e type inequalities without the assumption that the space admits any weak Poincar\'e inequality.


\begin{lemma}[\cite{GKZ}]\label{lemma:Sobolev-Poincare}
Let $s\in [0,\infty)$ and let $p\in (0,Q/s)$. There exists a constant $C$ such that for all measurable functions 
$u$ with $g\in \mathcal{D}^s(u)$, all $x\in X$ and $r>0$,
\begin{equation}\label{Sobolev-Poincare}
\begin{split}
\inf_{c\in\mathbb{R}}\bigg(\,\vint{B(x,r)}|u(y)-c|^{p^*(s)}\,d\mu(y)\bigg)^{1/p^*(s)}
\le Cr^s\bigg(\,\vint{B(x,2r)}g^{p}\,d\mu\bigg)^{1/p},
\end{split}
\end{equation}
where $p^*(s)=Qp/(Q-sp)$.
\end{lemma}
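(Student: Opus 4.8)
The plan is to convert the pointwise inequality \eqref{eq: gradient} into an integral estimate by a telescoping argument, and then to read off the gain of integrability from the mapping properties of a fractional integral operator; for small $p$ a truncation argument has to be added on top of this. No Poincar\'e inequality for $X$ is used, only the doubling property of $\mu$. First reduce to the case $s>0$: when $s=0$ one has $p^*(0)=p$, and the inequality follows directly from \eqref{eq: gradient} by averaging over $B(x,r)\times B(x,r)$ (which bounds $\inf_c$ from above) together with \eqref{doubling measure}. So assume $s>0$ and abbreviate $B=B(x,r)$, $\lambda B=B(x,\lambda r)$.

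Assume first that $1<p<Q/s$; then $u\in\lloc{X}$, so the averages $u_B$ are defined, and the key step is the pointwise telescoping estimate
\begin{equation}\label{eq:tele}
|u(z)-u_B|\le C\sum_{j=0}^{\infty}(2^{-j}r)^{s}\,\vint{B(z,2^{1-j}r)}g\,d\mu
\qquad\text{for a.e. }z\in B.
\end{equation}
To prove it, fix a Lebesgue point $z\in B$ of $u$ and put $B_j=B(z,2^{1-j}r)$, so that $B\subset B_0$ and $u_{B_j}\to u(z)$; then $|u(z)-u_B|\le|u_{B_0}-u_B|+\sum_{j\ge0}|u_{B_{j+1}}-u_{B_j}|$, and averaging \eqref{eq: gradient} over $B_j$, while using \eqref{doubling measure} for $\mu(B_j)/\mu(B_{j+1})\le c_d$,
\[
|u_{B_{j+1}}-u_{B_j}|\le\frac{\mu(B_j)}{\mu(B_{j+1})}\vint{B_j}|u-u_{B_j}|\,d\mu\le C\,(2^{1-j}r)^{s}\vint{B_j}g\,d\mu ,
\]
and $|u_{B_0}-u_B|\le Cr^{s}\vint{3B}g\,d\mu$ in the same way; since all the balls above lie in $3B$, summing gives \eqref{eq:tele}.

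The right-hand side of \eqref{eq:tele} is comparable to the generalised Riesz potential $I_{s}h(z)=\int_X d(z,y)^{s}\,\mu(B(z,d(z,y)))^{-1}h(y)\,d\mu(y)$ of $h=g\chi_{3B}$: splitting each ball $B(z,2^{1-j}r)$ into dyadic annuli about $z$ and using \eqref{doubling measure} gives $\sum_j(2^{-j}r)^{s}\vint{B(z,2^{1-j}r)}g\,d\mu\le C\,I_{s}(g\chi_{3B})(z)$. Now $I_{s}$ maps $L^{p}(X)$ boundedly into $L^{p^*(s)}(X)$ for $1<p<Q/s$ --- the doubling-space form of the Hardy--Littlewood--Sobolev theorem, the same circle of ideas that underlies Theorem \ref{fracM bounded} --- so integrating \eqref{eq:tele} over $B$ yields $\|u-u_B\|_{L^{p^*(s)}(B)}\le C\|I_{s}(g\chi_{3B})\|_{L^{p^*(s)}(X)}\le C\|g\|_{L^{p}(3B)}$. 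Dividing by $\mu(B)^{1/p^*(s)}$, using the doubling property and the identity $1/p-1/p^*(s)=s/Q$, and finally a routine chaining-of-balls argument to replace $3B$ by $2B$, gives the assertion for $1<p<Q/s$.

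The range $0<p\le 1$ is the main obstacle: for $p\le1$ averages of $g$ are not controlled by $\|g\|_{L^{p}}$, so \eqref{eq:tele} is of no direct use (and $u$ need not even be locally integrable, so $u_B$ is replaced by a median $m_B$ of $u$ over $B$). Here one proceeds by truncation in the spirit of Maz'ya: for $k\in\z$ set $u_k=\min\{\max\{|u-m_B|-2^k,0\},2^k\}$, and one establishes, by a Vitali covering argument built on \eqref{eq: gradient} --- selecting at each point of $\{u_k=2^k\}$ a ball on which the oscillation of $u_k$ is a fixed fraction of $2^k$, then estimating that oscillation by \eqref{eq: gradient} on the boundedly overlapping selected balls --- a weak-type bound of the shape $\mu(\{z\in B:u_k(z)=2^k\})\le C(2^{-k}r^{s})^{p^*(s)}\bigl(\int_{D_k\cap 2B}g^{p}\,d\mu\bigr)^{p^*(s)/p}$, where $D_k=\{2^k<|u-m_B|<2^{k+1}\}$. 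Because the sets $D_k$ are pairwise disjoint and $\theta:=p^*(s)/p\ge 1$, one has $\sum_k(\int_{D_k\cap 2B}g^{p}\,d\mu)^{\theta}\le(\int_{2B}g^{p}\,d\mu)^{\theta}$, and summing the weak bounds over $k$ collapses to $\int_B|u-m_B|^{p^*(s)}\,d\mu\le Cr^{sp^*(s)}(\int_{2B}g^{p}\,d\mu)^{\theta}$, which after dividing by $\mu(B)$ is the desired inequality; the same device reproves the endpoint $p=1$. The delicate parts, and the bulk of the work, are the stopping-time construction in the covering argument and the level-set bookkeeping.
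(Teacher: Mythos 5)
The paper does not prove this lemma at all -- it is quoted from \cite{GKZ} -- so your sketch has to be measured against the argument given there, which is run entirely locally, normalized on the ball, and uses only the relative lower bound \eqref{doubling dimension}. Your reduction to $s>0$, the telescoping estimate, and the truncation scheme for $0<p\le 1$ are in that spirit. The genuine gap is in your main branch $1<p<Q/s$. First, the pointwise domination $\sum_j(2^{-j}r)^s\vint{B(z,2^{1-j}r)}g\,d\mu\le C\,I_s(g\chi_{3B})(z)$ fails on a general doubling space: for $y$ with $d(z,y)=\rho$ your sum assigns the weight $\sum_{2^{1-j}r>\rho}(2^{-j}r)^s/\mu(B(z,2^{1-j}r))$, and without a reverse-doubling property (which fails, e.g., at atoms) this is not comparable to the kernel $\rho^s/\mu(B(z,\rho))$. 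Second, the global boundedness $I_s\colon L^p(X)\to L^{p^*}(X)$ is not available here: it needs the lower mass bound \eqref{measure lower bound}, which the paper assumes for Theorem \ref{fracM bounded} but deliberately not for this lemma. Third, even granting both, the normalization does not close: dividing $\|g\|_{L^p(3B)}$ by $\mu(B)^{1/p^*}$ leaves $\mu(B)^{1/p-1/p^*}(\vint{3B}g^p\,d\mu)^{1/p}=\mu(B)^{s/Q}(\vint{3B}g^p\,d\mu)^{1/p}$, and to replace $\mu(B)^{s/Q}$ by $r^s$ you need $\mu(B(x,r))\le Cr^Q$, i.e.\ Ahlfors regularity, which doubling does not provide. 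As written, this branch proves the lemma on Ahlfors $Q$-regular spaces such as $\rn$, not in the stated generality with a constant depending only on the doubling constant, $s$ and $p$.

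The repair is to stay relative to the ball: bound the telescoping sum by the restricted operator $Tg(z)=\sum_{j\ge 0}(2^{-j}r)^s\vint{B(z,2^{1-j}r)}g\chi_{3B}\,d\mu$ and prove $\big(\vint{B}|Tg|^{p^*}\,d\mu\big)^{1/p^*}\le Cr^s\big(\vint{3B}g^p\,d\mu\big)^{1/p}$ directly, using \eqref{doubling dimension} in the form $\mu(B(y,\rho))\ge C(\rho/r)^Q\mu(B(x,r))$ for $y\in B$, $\rho\le r$, via a weak-type estimate for the maximal function plus truncation; this is how the cited proof proceeds and it covers the whole range $0<p<Q/s$ at once, making the case split unnecessary. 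Relatedly, the weak-type inequality you state in the $p\le 1$ part is dimensionally off: it must be normalized (measure of the level set divided by $\mu(B)$ against $\vint{2B}g^p\,d\mu$), otherwise the final division by $\mu(B)$ leaves an uncontrolled factor of order $\mu(2B)^{p^*/p-1}$. The $s=0$ case and the telescoping inequality themselves are fine.
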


Moreover, if $p\ge Q/(Q+s)$ and $g\in \mathcal{D}^s(u)\cap L^p(X)$, then \eqref{Sobolev-Poincare} implies that $u$ is locally integrable and that 
\begin{equation}\label{Poincare}
\begin{split}
\vint{B(x,r)}|u-u_{B(x,r)}|\,d\mu
\le Cr^s \bigg(\,\vint{B(x,2r)}g^{p}\,d\mu\bigg)^{1/p}.
\end{split}
\end{equation}
For the case $s=1$, see \cite{H} and \cite{H2}.

In the next theorem, we use the following simple result.
 If $u_{i}$, $i\in\n$, are measurable functions with a common $s$-Haj\l asz gradient $g$ and 
 $u=\sup_{i}u_{i}$ is finite almost everywhere, then $g$ is an $s$-Haj\l asz gradient of $u$.

\begin{theorem}\label{sobo gradient}
Assume that $\M_\alpha^*u\not\equiv\infty$. 
Let $t\ge Q/(Q+s)$ and let $g$ be an $s$-Haj\l asz gradient of $u$. 
\begin{itemize}
\item[a)] If $0< s+\alpha\le 1$, then there exists
a constant $C>0$ such that
\[
\tilde g=C\left(\M g^t\right)^{1/t}
\]
is an $(s+\alpha)$-Haj\l asz gradient of 
$\M_\alpha^* u$.
\item[b)] If $s+\alpha > 1$, then there exists
a constant $C>0$ such that
\[
\tilde g=C\left(\M_{t(s+\alpha-1)} g^t\right)^{1/t}
\]
is a $1$-Haj\l asz gradient of  $\M_\alpha^* u$.
\end{itemize}
\end{theorem}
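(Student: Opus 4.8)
The plan is to estimate the oscillation $|\M_\alpha^* u(x) - \M_\alpha^* u(y)|$ by a telescoping argument over dyadic scales, exploiting the fact that $\M_\alpha^* u$ is a supremum of discrete convolutions $|u|_{r}^\alpha$. First I would fix $x,y \in X \setminus E$ (where $E$ is the null set from the definition of the Haj\l asz gradient) and set $R = d(x,y)$. The key observation is that $\big|\M_\alpha^* u(x) - \M_\alpha^* u(y)\big| \le \sup_j \big| |u|_{r_j}^\alpha(x) - |u|_{r_j}^\alpha(y)\big|$, so it suffices to bound $\big| |u|_{r}^\alpha(x) - |u|_{r}^\alpha(y)\big|$ uniformly in the scale $r$, with a constant times $d(x,y)^{s+\alpha}(\tilde g(x) + \tilde g(y))$ in case (a), and times $d(x,y)(\tilde g(x)+\tilde g(y))$ in case (b). I would split into two regimes: the \emph{large-scale} regime $r \gtrsim R$ and the \emph{small-scale} regime $r \lesssim R$.

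In the large-scale regime, since the partition functions $\ph_i$ at scale $r$ are Lipschitz with constant $L/r$ and sum to $1$, one has the standard identity $u_r(x) - u_r(y) = \sum_i (\ph_i(x) - \ph_i(y))(u_{B(x_i,3r)} - c)$ for any constant $c$; choosing $c$ to be an average of $|u|$ over a ball of radius comparable to $r$ containing both $x$ and $y$, and using that only boundedly many ($\le N$) terms are nonzero and that $d(x,y) \le r$, one reduces to controlling $r^{\alpha} \cdot \tfrac{d(x,y)}{r} \cdot \vint{B}\big||u| - c\big|\,d\mu$. The Poincar\'e inequality \eqref{Poincare} applied to $|u|$ (which has the same gradient $g$, up to the triangle inequality) bounds $\vint{B}||u|-c|\,d\mu$ by $C r^s (\vint{2B} g^p)^{1/p}$; taking $p = t \ge Q/(Q+s)$ and recognizing $(\vint{2B} g^t)^{1/t} \lesssim (\M g^t(x))^{1/t}$ gives a bound of the form $C\, r^{s+\alpha - 1} d(x,y)\, (\M g^t(x))^{1/t}$. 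When $s+\alpha \le 1$ we further use $r^{s+\alpha-1} \le d(x,y)^{s+\alpha-1}$ (valid since $r \ge d(x,y)$ and the exponent is $\le 0$) to land at $d(x,y)^{s+\alpha}(\M g^t(x))^{1/t}$, as required for (a); when $s+\alpha > 1$ the factor $r^{s+\alpha-1}$ is handled by the fractional maximal function $\M_{t(s+\alpha-1)}g^t$ evaluated at $x$, giving the $1$-Haj\l asz gradient in (b).

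In the small-scale regime $r \lesssim R = d(x,y)$, one does not expect cancellation between $|u|_r^\alpha(x)$ and $|u|_r^\alpha(y)$; instead I would bound each term separately by $|u|_r^\alpha(x) \le \M_\alpha^* u(x)$ is useless, so instead estimate $|u|_r^\alpha(x)$ by comparing the discrete convolution value to the pointwise value of $|u|$ via a telescoping sum over scales $r, 2r, 4r, \dots$ down from a ball of radius $\sim R$: using \eqref{Poincare} at each scale and summing the geometric series (which converges precisely because $s \ge 0$, and with the extra $r^\alpha$ weight because $s + \alpha > 0$), one gets $\big||u|_r(x) - |u|_{B(x,R)}\big| \lesssim R^s (\M g^t(x))^{1/t}$ (and similarly with the $\M_{t(s+\alpha-1)}$ variant), hence $r^\alpha |u|_r(x) \lesssim R^\alpha \cdot R^s (\ldots) = d(x,y)^{s+\alpha}(\ldots)$. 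Adding the contributions at $x$ and $y$ and noting $|u|_{B(x,R)}$ and $|u|_{B(y,R)}$ are comparable (the balls have comparable radius and overlap) closes this regime. The main obstacle is bookkeeping: one must make all the ball inclusions and radius comparisons explicit so the doubling constant absorbs them, verify that replacing $u$ by $|u|$ is harmless for the gradient, and — most delicately in case (b) — check that the telescoping series with the $r^\alpha$ weight still sums when $s+\alpha>1$, which forces the appearance of the fractional maximal operator $\M_{t(s+\alpha-1)}$ rather than the plain $\M$. Finally, once $\tilde g$ is shown to be an $(s+\alpha)$- (resp.\ $1$-) Haj\l asz gradient of each $|u|_{r_j}^\alpha$ with a uniform constant, the simple remark preceding the theorem — that a common Haj\l asz gradient of a family passes to its (a.e.\ finite) supremum — yields the result for $\M_\alpha^* u$.
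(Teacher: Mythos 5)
Your plan reproduces the paper's argument essentially step for step: the same split into the regimes $r\ge d(x,y)$ and $r<d(x,y)$, the partition-of-unity/Lipschitz estimate combined with the Poincar\'e inequality \eqref{Poincare} at exponent $t$ in the large-scale case, the dyadic chaining from scale $r$ up to scale $\sim d(x,y)$ with the extra radius power absorbed into $\M$ (when $s+\alpha\le 1$) or into $\M_{t(s+\alpha-1)}$ (when $s+\alpha>1$) in the small-scale case, and the passage from the uniform estimates for $|u|_r^\alpha$ to $\M_\alpha^*u$ via the remark preceding the theorem. The only blemish is the phrase ``bound each term separately'' and the line ``$r^\alpha |u|_r(x)\lesssim d(x,y)^{s+\alpha}(\dots)$'', which is false as literally written (take $u\equiv 1$); what your telescoping estimate actually gives, and what you evidently use when you then compare $|u|_{B(x,R)}$ with $|u|_{B(y,R)}$ through \eqref{Poincare}, is the correct bound for the deviation $r^\alpha\bigl||u|_r(x)-|u|_{B(x,R)}\bigr|$, exactly as in the paper's estimates around \eqref{pieni r}--\eqref{pieni r 3 y}.
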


\begin{proof}
We begin by proving the claims for $u_{r}^{\alpha}$.
Let $r>0$, let $g\in\mathcal D^{s}(u)$ and let $x,y\in X$. 

Assume first that $r\ge d(x,y)$. Let $I_{xy}$ be a set of indices $i$ for which $x$ or $y$ belongs to 
$B(x_{i},6r)$. Then, for each $i\in I_{xy}$, $B(x_{i},3r)\subset B(x,10r)\subset B(x_{i},17r)$.
This together with the doubling condition, the properties of the functions $\ph_{i}$, 
the fact that there are bounded number of indices in $I_{xy}$ and 
Poincar\'e inequality \eqref{Poincare} implies that
 \begin{equation}\label{iso r}
\begin{aligned}
 |u_{r}^{\alpha}(x)-u_{r}^{\alpha}(y)|
 &\le r^{\alpha}\sum_{i\in I_{xy}}|\ph_{i}(x)-\ph_{i}(y)||u_{B(x_{i},3r)}-u_{B(x,10r)}|\\
 &\le C r^{\alpha-1}d(x,y)\sum_{i\in I_{xy}}\vint{B(x,10r)}|u-u_{B(x_{i},3r)}|\,d\mu\\
 &\le C r^{\alpha-1}d(x,y)\vint{B(x,10r)}|u-u_{B(x,10r)}|\,d\mu\\
&\le C r^{s+\alpha-1}d(x,y)\bigg(\,\vint{B(x,20r)}g^{t}\,d\mu\bigg)^{1/t}.
\end{aligned}
\end{equation}
If $0<s+\alpha\le1$, then by \eqref{iso r} and the assumption $r\ge d(x,y)$, we have that
\[
 |u_{r}^{\alpha}(x)-u_{r}^{\alpha}(y)|
 \le C d(x,y)^{s+\alpha}\big(\M g^{t}(x)\big)^{1/t}.
\]
If $s+\alpha>1$, then by \eqref{iso r},
\[
 |u_{r}^{\alpha}(x)-u_{r}^{\alpha}(y)|
 \le C d(x,y)\big(\M_{t(s+\alpha-1)} g^{t}(x)\big)^{1/t}.
\]
This shows that Haj\l asz gradient inequality \eqref{eq: gradient} with desired exponent holds when $r\ge d(x,y)$.

Assume then that $r<d(x,y)$. Let $R=d(x,y)$. Then $B(y,r)\subset B(x,2R)$ and 
\begin{equation}\label{sobo pieni r}
\begin{aligned}
  |u_{r}^{\alpha}(x)-u_{r}^{\alpha}(y)|
 \le r^{\alpha}\Big(&\sum_{i\in I_{x}}\ph_{i}(x)|u_{B(x_{i},3r)}-u_{B(x,9R)}|\\
                                + &\sum_{i\in I_{y}}\ph_{i}(y)|u_{B(x_{i},3r)}-u_{B(x,9R)}|\Big),
\end{aligned}
\end{equation}
where $I_{x}$ is a set of indices $i$ for which $x$ belongs to $B(x_{i},6r)$ and $I_{y}$ the corresponding set for $y$. 
Let $k\in\n$ be the smallest integer such that $2^{k}r\ge R$.

Assume first that $0<s+\alpha\le1$. If $i\in I_{x}$,
then 
\begin{equation}\label{pieni r}
\begin{aligned}
 |u_{B(x_{i},3r)}-u_{B(x,9R)}|
 &\le  |u_{B(x_{i},3r)}-u_{B(x,9r)}| + \sum_{i=1}^{k}|u_{B(x,2^{i}9r)}-u_{B(x,2^{i-1}9r)}|\\
& +|u_{B(x,2^{k}9r)}-u_{B(x,9R)}|.
\end{aligned}
\end{equation}
By the doubling condition and Poincar\'e inequality \eqref{Poincare}, we have
\begin{equation}\label{pieni r 1}
\begin{aligned}
r^{\alpha}|u_{B(x_{i},3r)}-u_{B(x,9r)}| 
&\le C r^{\alpha}\vint{B(x,9r)}|u-u_{B(x,9r)}|\,d\mu\\
&\le C r^{s+\alpha}\bigg(\,\vint{B(x,18r)}g^{t}\,d\mu\bigg)^{1/t}\\
&\le C R^{s+\alpha}\big(\M g^{t}(x)\big)^{1/t},
\end{aligned}
\end{equation}
and, by the doubling condition, Poincar\'e inequality \eqref{Poincare}, the fact that $r\le 2^{i}9r$ for all 
$i$, and  the selection of $k$, 
\begin{equation}\label{pieni r 2}
\begin{aligned}
r^{\alpha}\sum_{i=1}^{k}|u_{B(x,2^{i}9r)}-u_{B(x,2^{i-1}9r)}|
&\le Cr^{\alpha}\sum_{i=1}^{k}\vint{B(x,2^{i}9r)}|u-u_{B(x,2^{i}9r)}|\,d\mu\\
&\le C\sum_{i=1}^{k}(2^{i}9r)^{s+\alpha}\bigg(\,\vint{B(x,2^{i+1}9r)}g^{t}\,d\mu\bigg)^{1/t}\\
&\le CR^{s+\alpha}\big(\M g^{t}(x)\big)^{1/t}.
\end{aligned}
\end{equation}
Similarly we obtain that
\begin{equation}\label{pieni r 3}
\begin{aligned}
r^{\alpha}|u_{B(x,2^{k}9r)}-u_{B(x,9R)}|
&\le CR^{s+\alpha}\bigg(\,\vint{B(x,36R)}g^{t}\,d\mu\bigg)^{1/t}\\
&\le CR^{s+\alpha}\big(\M g^{t}(x)\big)^{1/t}.
\end{aligned}
\end{equation}
If $i\in I_{y}$, we use balls $B(y,2^{i}9r)$ instead of balls $B(x,2^{i}9r)$ in \eqref{pieni r}. 
Estimates corresponding \eqref{pieni r 1} and \eqref{pieni r 2} are as above ($x$ replaced by $y$) and, 
corresponding to \eqref{pieni r 3},
\begin{equation}\label{pieni r 3 y}
\begin{aligned}
r^{\alpha}|u_{B(y,2^{k}9r)}-u_{B(x,9R)}|
&\le CR^{s+\alpha}\bigg(\,\vint{B(x,38R)}g^{t}\,d\mu\bigg)^{1/t}\\
&\le CR^{s+\alpha}\big(\M g^{t}(x)\big)^{1/t}.
\end{aligned}
\end{equation}
Now, by \eqref{sobo pieni r}-\eqref{pieni r 3 y} and the fact $R=d(x,y)$, we have
\[
 r^{\alpha} |u_{B(x_{i},3r)}-u_{B(x,R)}|
 \le C d(x,y)^{s+\alpha}\Big(\big(\M g^{t}(x)\big)^{1/t}+\M g^{t}(y)\big)^{1/t}\Big).
\]
If $s+\alpha>1$, then similar estimates as above show that if $i\in I_{x}\cup I_{y}$, then 
\begin{equation}\label{iso s alpha}
 r^{\alpha} |u_{B(x_{i},3r)}-u_{B(x,R)}|
\le C d(x,y)\Big(\big(\M_{t(s+\alpha-1)} g^{t}(x)\big)^{1/t}
+\big(\M_{t(s+\alpha-1)} g^{t}(y)\big)^{1/t}\Big).
\end{equation}
These estimates together with \eqref{sobo pieni r} and the fact that there are bounded number of indices in $I_{x}$ and $I_{y}$ imply that Haj\l asz gradient inequality \eqref{eq: gradient} with desired exponent holds when 
$r<d(x,y)$.

The claim for $u_{r}^{\alpha}$ follows from the estimates above and for $\M_\alpha^* u$ from the discussion before the theorem. 
\end{proof}

\begin{theorem}\label{sobo norm}
Let  $Q/(Q+s)<p<\infty$. 
\begin{itemize}
\item[a)] If $0< s+\alpha\le 1$, there exists a constant $C>0$, such that 
\[
\|\M_\alpha^* u\|_{\dot M^{s+\alpha,p}(X)}\le C\|u\|_{\dot M^{s,p}(X)}
\]
for all  $u\in\dot{M}^{s,p}(X)$ with $\M^*_\alpha u\not\equiv\infty$.
\item[b)] If $1<s+\alpha\le 1+Q/p$ and the measure lower bound condition holds, there exists
a constant $C>0$ such that 
\[
\|\M_\alpha^* u\|_{\dot M^{1,q}(X)}\le C\|u\|_{\dot M^{s,p}(X)},
\]
where $q=Qp/(Q-(s+\alpha-1)p)$, for all  $u\in\dot{M}^{s,p}(X)$ with $\M^*_\alpha u\not\equiv\infty$.
\end{itemize}
\end{theorem}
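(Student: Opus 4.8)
The analytic work for this theorem is already contained in Theorem~\ref{sobo gradient}: for both parts the plan is to feed an $s$-Haj\l asz gradient of $u$ into that theorem, obtain a Haj\l asz gradient of $\M_\alpha^* u$ of the prescribed order expressed through a (fractional) maximal function, and then bound the $L^p$- or $L^q$-norm of that gradient by $\|g\|_{L^p(X)}$. The only real bookkeeping is the choice of an auxiliary integrability exponent $t$. Fix $u\in\dot M^{s,p}(X)$ with $\M_\alpha^* u\not\equiv\infty$ and let $g\in\mathcal D^s(u)$ with $\|g\|_{L^p(X)}$ finite. Since $Q/(Q+s)<p$, we may fix $t$ with $Q/(Q+s)\le t<p$; then Theorem~\ref{sobo gradient} applies with this $t$, and $p/t>1$.

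For part (a), Theorem~\ref{sobo gradient}(a) produces a constant $C$ with $\tilde g=C(\M g^t)^{1/t}\in\mathcal D^{s+\alpha}(\M_\alpha^* u)$. Because $\mu$ is doubling and $p/t>1$, the Hardy--Littlewood maximal operator is bounded on $L^{p/t}(X)$, so
\[
\|\tilde g\|_{L^p(X)}^p=C\|\M g^t\|_{L^{p/t}(X)}^{p/t}\le C\|g^t\|_{L^{p/t}(X)}^{p/t}=C\|g\|_{L^p(X)}^p .
\]
Hence $\tilde g\in L^p(X)$, which shows $\M_\alpha^* u\in\dot M^{s+\alpha,p}(X)$ with $\|\M_\alpha^* u\|_{\dot M^{s+\alpha,p}(X)}\le\|\tilde g\|_{L^p(X)}\le C\|g\|_{L^p(X)}$; taking the infimum over $g\in\mathcal D^s(u)$ gives (a). No measure lower bound is used, which is consistent with the statement.

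For part (b), Theorem~\ref{sobo gradient}(b) gives $\tilde g=C(\M_{t(s+\alpha-1)}g^t)^{1/t}\in\mathcal D^{1}(\M_\alpha^* u)$. When $1<s+\alpha<1+Q/p$ one has $0<t(s+\alpha-1)<Q/(p/t)$, so Theorem~\ref{fracM bounded}, applicable under the measure lower bound with exponent $p/t>1$ and parameter $t(s+\alpha-1)$, yields $\|\M_{t(s+\alpha-1)}g^t\|_{L^{P^*}(X)}\le C\|g^t\|_{L^{p/t}(X)}$ with $P^*=Q(p/t)/(Q-t(s+\alpha-1)(p/t))$. A short computation gives $tP^*=Qp/(Q-(s+\alpha-1)p)=q$, so that
\[
\|\tilde g\|_{L^q(X)}^q=C\|\M_{t(s+\alpha-1)}g^t\|_{L^{P^*}(X)}^{P^*}\le C\|g\|_{L^p(X)}^q ,
\]
and taking the infimum over $g$ completes this case. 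In the endpoint $s+\alpha=1+Q/p$ one has $q=\infty$, and the bound $\|\M_{Q/(p/t)}h\|_{L^\infty(X)}\le C\|h\|_{L^{p/t}(X)}$ follows directly from H\"older's inequality together with the measure lower bound, so the same scheme still applies.

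The step I expect to need the most care is not any individual estimate but the simultaneous constraints on $t$: it must satisfy $t\ge Q/(Q+s)$ for Theorem~\ref{sobo gradient} to apply, $t<p$ for the maximal operator to act boundedly on $L^{p/t}(X)$, and in (b) also $0<t(s+\alpha-1)<Qt/p$ for Theorem~\ref{fracM bounded}; verifying that $Q/(Q+s)<p<\infty$ and, in (b), $1<s+\alpha\le 1+Q/p$ make such a $t$ available, and that the exponents combine to exactly $p$ and $q$, is the heart of the argument. Everything else is an immediate consequence of Theorem~\ref{sobo gradient} and the known mapping properties of the (fractional) maximal operators.
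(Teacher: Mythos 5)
Your proof is correct and takes essentially the same route as the paper's: fix $t$ with $Q/(Q+s)\le t<p$, apply Theorem \ref{sobo gradient} to get the gradient $C(\M g^t)^{1/t}$ (resp.\ $C(\M_{t(s+\alpha-1)}g^t)^{1/t}$), and conclude by the boundedness of the Hardy--Littlewood maximal operator on $L^{p/t}(X)$ in (a) and by Theorem \ref{fracM bounded} in (b). Your explicit exponent computation $tP^*=q$ and the H\"older-plus-lower-bound treatment of the endpoint $s+\alpha=1+Q/p$ only spell out details the paper leaves implicit.
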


\begin{proof}
a) Let $Q/(Q+s)\le t<p$. By Theorem \ref{sobo gradient}, the function $C(\M g^t)^{1/t}$ is an 
$(s+\alpha)$-gradient of $\M_\alpha^* u$. 
Since $g\in L^{p}(X)$, the claim follows from the boundedness of the Hardy--Littlewood maximal operator in $L^{q}(X)$ for $q>1$.
\medskip

b) Let $Q/(Q+s)\le t<p$. By Theorem \ref{sobo gradient}, the function $(\M_{t(s+\alpha-1)} g^t)^{1/t}$
is a $1$-gradient of  $\M_\alpha^* u$. 
Since $g\in L^{p}(X)$, the claim follows from Theorem \ref{fracM bounded}.
\end{proof}

\begin{remark}\label{remark: KS}
In the cases $s=0$ and $s=1$ of Theorem \ref{sobo norm}.b), we obtain counterparts of the results
of Kinnunen and Saksman.
\end{remark}

\begin{remark}
As a special case of Theorems \ref{sobo gradient} and \ref{sobo norm} we obtain boundedness results for the discrete maximal operator $\M^{*}$ in  $\dot M^{s,p}(X)$.  
If $0< s\le 1$, then $\tilde g=C(\M g^t)^{1/t}$ is an $s$-Haj\l asz gradient of  $\M^* u$ for all 
$t\ge Q/(Q+s)$ and 
\[
\|\M^* u\|_{\dot M^{s,p}(X)}\le C\|u\|_{\dot M^{s,p}(X)}
\]
for all  $u\in\dot{M}^{s,p}(X)$, $p>Q/(Q+s)$.

If $1<s\le 1+Q/p$, then 
$\tilde g=C(\M_{t(s-1)} g^t)^{1/t}$
is a $1$-Haj\l asz gradient of  $\M^* u$ for all $t\ge Q/(Q+s)$ and
\[
\|\M^* u\|_{\dot M^{1,q}(X)}\le C\|u\|_{\dot M^{s,p}(X)},
\]
where $q=Qp/(Q-(s-1)p)$, for all  $u\in\dot{M}^{s,p}(X)$.

Moreover, when $s=1$, we obtain boundedness results for the discrete maximal operator $\M^{*}$ in (homogeneous) 
Haj\l asz spaces $M^{1,p}(X)$, proved earlier for $M^{1,p}(X)$ in \cite{KL} and \cite{KT}.  
\end{remark}

\section{Haj\l asz--Besov and Haj\l asz--Triebel--Lizorkin spaces}\label{section TL}
Let $u$ be a measurable function and let $s\in(0,\infty)$. 
Following \cite{KYZ}, we say that a sequence of nonnegative measurable functions 
$(g_k)_{k\in\mathbb{Z}}$ is a fractional $s$-Haj\l asz gradient of $u$ if there exists $E\subset X$ 
with $\mu(E)=0$ such that
\[
|u(x)-u(y)|\le d(x,y)^s(g_k(x)+g_k(y))
\]
 for all $k\in\mathbb Z$ and all $x,y\in X\setminus E$ satisfying 
$2^{-k-1}\le d(x,y)<2^{-k}$.
The collection of all fractional $s$-Haj\l asz gradients of $u$ is denoted by $\mathbb{D}^s(u)$.

For $p\in(0,\infty)$, $q\in(0,\infty]$ and a sequence $(f_k)_{k\in\mathbb{Z}}$ of measurable 
functions, we write
\[
\|(f_k)_{k\in\mathbb{Z}}\|_{L^p(X,\,l^q)}
=\left\|\|\left(f_k\right)_{k\in\mathbb{Z}}\|_{l^q}\right\|_{L^p(X)}
\]
and
\[
\|(f_k)_{k\in\mathbb{Z}}\|_{l^q(L^p(X))}=
\big\|\left(\|f_k\|_{L^p(X)}\right)_{k\in\mathbb{Z}}\big\|_{l^q},
\]
where $\|(f_k)\|_{l^{q}}=(\sum_{k\in\mathbb{Z}}|f_{k}|^{q})^{1/q}$ if $0<q<\infty$ and 
$\|(f_k)\|_{l^{\infty}}=\sup_{k\in\mathbb{Z}}|f_{k}|$.

The homogeneous Haj\l asz--Triebel--Lizorkin space $\dot M_{p,q}^s(X)$ consists of measurable functions $u$ such that
\[
\|u\|_{\dot M_{p,q}^s(X)}=\inf_{(g_k)\in\mathbb{D}^s(u)}\|(g_k)\|_{L^p(X,\,l^q)}
\]
is finite. 
The Haj\l asz--Triebel--Lizorkin space $M_{p,q}^s(X)$ is $\dot M_{p,q}^s(X)\cap L^{p}(X)$
equipped with the norm
\[
\|u\|_{M_{p,q}^s(X)}=\|u\|_{L^p(X)}+\|u\|_{\dot M_{p,q}^s(X)}.
\]

The homogeneous Haj\l asz--Besov space $\dot N_{p,q}^s(X)$ consists of measurable functions $u$ such that
\[
\|u\|_{\dot N_{p,q}^s(X)}=\inf_{(g_k)\in\mathbb{D}^s(u)}\|(g_k)\|_{l^q(L^p(X))}
\]
is finite and the Haj\l asz--Besov space $N_{p,q}^s(X)$ is $\dot N_{p,q}^s(X)\cap L^p(X)$ 
equipped with the norm
\[
\|u\|_{N_{p,q}^s(X)}=\|u\|_{L^p(X)}+\|u\|_{\dot N_{p,q}^s(X)}.
\]

Notice that $\dot M_{p,\infty}^s(X)$ is the homogeneous fractional Haj\l asz space $\dot M^{s,p}(X)$,  for the simple proof, see \cite[Prop.\ 2.1]{KYZ}.
The homogeneous Haj\l asz--Triebel--Lizorkin space $\dot M^s_{p,q}(\rn)$ coincides with the 
classical homogeneous Triebel-Lizor\-kin space $\dot F^s_{p,q}(\rn)$ 
for $s\in (0,1)$, $p\in(n/(n+s),\infty)$ and $q\in(n/(n+s),\infty]$. Similarly, $\dot N_{p,q}^s(\rn)$ coincides with the classical homogeneous Besov space $\dot B^s_{p,q}(\rn)$ for $s\in (0,1)$, $p\in(n/(n+s),\infty)$ and $q\in(0,\infty]$ by \cite[Thm 1.2]{KYZ}. For the definitions of $F^s_{p,q}(\rn)$ and $B^s_{p,q}(\rn)$, see \cite{Tr}.

If $X$ supports a (weak) $(1,p)$-Poincar\'e inequality with $p\in (1,\infty)$, then for all $q\in(0,\infty)$, the spaces $\dot M^1_{p,q}(X)$ and $\dot N^1_{p,q}(X)$ are trivial, that is, they contain
only constant functions, see \cite[Thm 4.1]{GKZ}.

\begin{lemma}[\cite{GKZ}]\label{lemma:Sobolev-Poincare 2}
Let $s\in (0,\infty)$ and $p\in (0,Q/s)$. Then for every $\eps,\eps'\in(0,s)$ with $\eps<\eps'$ there exists a constant $C>0$ such that for all measurable functions $u$ with $(g_j)\in \mathbb{D}^s(u)$, $x\in X$ and $k\in\mathbb{Z}$,
\begin{equation}\label{Sobolev-Poincare 2}
\begin{split}
&\inf_{c\in\mathbb{R}}\bigg(\,\vint{B(x,2^{-k})}|u(y)-c|^{p^*(\eps)}\,d\mu(y)\bigg)^{1/p^*(\eps)}\\
&\le C2^{-k\eps'}\sum_{j\ge k-2}2^{-j(s-\eps')}
\bigg(\,\vint{B(x,2^{-k+1})}g_j^{p}\,d\mu\bigg)^{1/p},
\end{split}
\end{equation}
where $p^*(\eps)=Qp/(Q-\eps p)$.
\end{lemma}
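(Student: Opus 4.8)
The plan is to deduce \eqref{Sobolev-Poincare 2} from the Sobolev--Poincar\'e inequality \eqref{Sobolev-Poincare} of Lemma~\ref{lemma:Sobolev-Poincare}, applied to the ordinary fractional Haj\l asz space with the \emph{smaller} smoothness exponent $\eps$, and then to re-sum the resulting estimate. Fix $x\in X$ and $k\in\mathbb Z$, write $B=B(x,2^{-k})$ and $2B=B(x,2^{-k+1})$, and let $(g_j)\in\mathbb D^s(u)$; we may assume the right-hand side of \eqref{Sobolev-Poincare 2} is finite.

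The first step is to manufacture, out of the sequence $(g_j)$, a single Haj\l asz $\eps$-gradient of $u$ adapted to $2B$. If $y,z\in 2B$ then $d(y,z)<2^{-(k-2)}$, so the index $j$ with $2^{-j-1}\le d(y,z)<2^{-j}$ satisfies $j\ge k-2$; moreover, since $\eps<s$ and $d(y,z)<2^{-j}$,
\[
|u(y)-u(z)|\le d(y,z)^s\bigl(g_j(y)+g_j(z)\bigr)\le d(y,z)^{\eps}\,2^{-j(s-\eps)}\bigl(g_j(y)+g_j(z)\bigr).
\]
Hence $G=\bigl(\sum_{j\ge k-2}(2^{-j(s-\eps)}g_j)^p\bigr)^{1/p}$ dominates $2^{-j(s-\eps)}g_j$ for every $j\ge k-2$ and therefore satisfies the $\eps$-Haj\l asz gradient inequality for all pairs in $2B$, while $\|G\|_{L^p(2B)}^p=\sum_{j\ge k-2}2^{-jp(s-\eps)}\int_{2B}g_j^p\,d\mu$ by Fubini's theorem. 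Since $p<Q/s<Q/\eps$, applying \eqref{Sobolev-Poincare} on $B$ (its right-hand side is an average over $2B$) yields
\[
\inf_{c\in\re}\Bigl(\,\vint{B}|u-c|^{p^*(\eps)}\,d\mu\Bigr)^{1/p^*(\eps)}\le C\,2^{-k\eps}\Bigl(\sum_{j\ge k-2}2^{-jp(s-\eps)}\vint{2B}g_j^p\,d\mu\Bigr)^{1/p}.
\]

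It remains to pass from this $\ell^p$-sum with weight $2^{-j(s-\eps)}$ to the $\ell^1$-sum with weight $2^{-j(s-\eps')}$ appearing in \eqref{Sobolev-Poincare 2}. Writing $a_j=(\vint{2B}g_j^p\,d\mu)^{1/p}$, $b_j=2^{-k\eps}2^{-j(s-\eps)}a_j$ and $c_j=2^{-k\eps'}2^{-j(s-\eps')}a_j$, one has $b_j=2^{-(j-k)(\eps'-\eps)}c_j$, and since $\eps'>\eps$,
\[
\sum_{j\ge k-2}b_j^p\le\Bigl(\sup_{j\ge k-2}c_j\Bigr)^p\sum_{j\ge k-2}2^{-p(j-k)(\eps'-\eps)}\le C\Bigl(\sum_{j\ge k-2}c_j\Bigr)^p,
\]
the series being convergent geometric. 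Taking $p$th roots and inserting this into the previous display gives \eqref{Sobolev-Poincare 2}.

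I expect two points to require care. The use of $\eps<\eps'$ is exactly what makes the geometric series above converge; for $p\le 1$ the inclusion $\ell^1\subset\ell^p$ fails, so this extra decay is indispensable, and this is the reason the statement is formulated with two exponents rather than one. The other, more structural point is that Lemma~\ref{lemma:Sobolev-Poincare} is stated for a gradient $g\in\mathcal D^{\eps}(u)$ defined on all of $X$, whereas $G$ above is only a gradient on $2B$; this is harmless because the standard proof of \eqref{Sobolev-Poincare} (see \cite{H}, \cite{GKZ}) uses the gradient inequality only for pairs of points inside the double ball $B(x,2\cdot 2^{-k})=2B$, so no global extension of $G$ is needed, but it is the step where one must be precise.
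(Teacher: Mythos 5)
This lemma is not proved in the paper at all: it is quoted from \cite{GKZ}, just like Lemma~\ref{lemma:Sobolev-Poincare}, so there is no ``paper proof'' to match your argument against. Taken on its own terms, your derivation is correct and is an attractive route: you reduce the sequence version \eqref{Sobolev-Poincare 2} to the single-gradient inequality \eqref{Sobolev-Poincare} by observing that for $y,z\in B(x,2^{-k+1})$ the relevant scale index satisfies $j\ge k-2$, so that $G=\bigl(\sum_{j\ge k-2}(2^{-j(s-\eps)}g_j)^p\bigr)^{1/p}$ is an $\eps$-gradient of $u$ on the doubled ball; applying \eqref{Sobolev-Poincare} with smoothness $\eps$ (legitimate since $p<Q/s<Q/\eps$) and then trading the $\ell^p$-sum with weight $2^{-j(s-\eps)}$ for the $\ell^1$-sum with weight $2^{-j(s-\eps')}$ via the geometric factor $2^{-(j-k)(\eps'-\eps)}$ gives exactly the stated bound, with the correct exponent $p^*(\eps)$ and constants independent of $k$. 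Your closing remarks identify the right pressure points: the gap $\eps<\eps'$ is precisely what rescues the case $p<1$, where one cannot dominate an $\ell^p$ quasi-norm by an $\ell^1$ norm without extra decay; and the only step that goes beyond the literal statement of Lemma~\ref{lemma:Sobolev-Poincare} as printed here is that $G$ is a gradient of $u$ only on $B(x,2^{-k+1})$, so you need the localized form of the Sobolev--Poincar\'e inequality (gradient inequality assumed only for a.e.\ pairs in the doubled ball). That localized form is indeed what is proved in \cite{H}, \cite{GKZ} --- the pointwise Haj\l asz definition makes the argument entirely local --- but in a fully rigorous write-up you should state and invoke that version explicitly rather than the global one.
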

If $p\ge Q/(Q+\eps)$, then \eqref{Sobolev-Poincare 2} implies that
\begin{equation}\label{Poincare 2}
\begin{split}
\vint{B(x,2^{-k})}|u-u_{B(x,2^{-k})}|\,d\mu
\le C2^{-k\eps'}\sum_{j\ge k-2}2^{-j(s-\eps')}
\bigg(\,\vint{B(x,2^{-k+1})}g_j^{p}\,d\mu\bigg)^{1/p}.
\end{split}
\end{equation}

We are now ready to state and prove our main results. Theorem \ref{main} below gives a formula for
an $(s+\alpha)$-Haj\l asz gradient of $\M^*_\alpha$ in terms of an $s$-Haj\l asz gradient of $u$.
This easily implies the desired boundedness results for $\M^*_\alpha$ in homogeneous Haj\l asz--Besov and Haj\l asz--Triebel--Lizorkin spaces. For related results concerning
Riesz potentials in the metric setting, see \cite{Y2}

\begin{theorem}\label{main}
Assume that $\M^*_\alpha u\not\equiv\infty$ and that $(g_k)\in\mathbb{D}^s(u)$.
Let $0< s+\alpha<1$, $0<\delta<1-s-\alpha$, $0<\eps<\eps'<s$ and $t\ge Q/(Q+\eps)$.
Then there is a constant $C>0$, indepent of $u$ and $(g_{k})$, such that $(C\tilde g_k)$, where
\begin{equation}\label{eq:gradient}
\tilde g_k=\sum_{j=-\infty}^{k} 2^{(j-k)\delta}\left(\M g_j^t\right)^{1/t}  \ 
+ \ \sum_{j=k-7}^\infty 2^{(k-j)(s-\eps')}\left(\M g_j^t\right)^{1/t}, 
\end{equation}
is a fractional $(s+\alpha)$-Haj\l asz gradient of $\M_\alpha^* u$.
\end{theorem}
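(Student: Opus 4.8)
The plan is to follow the scheme of Theorem~\ref{sobo gradient}. I would first prove, with a constant independent of $r>0$, that
\[
|u_r^\alpha(x)-u_r^\alpha(y)|\le C\,d(x,y)^{s+\alpha}\big(\tilde g_k(x)+\tilde g_k(y)\big)
\]
whenever $2^{-k-1}\le d(x,y)<2^{-k}$, and then pass to $\M^*_\alpha u=\sup_j|u|_{r_j}^\alpha$ using the fractional analogue of the remark preceding Theorem~\ref{sobo gradient} (a common fractional $(s+\alpha)$-Haj\l asz gradient of a family of functions is one for their pointwise supremum, when the latter is finite a.e.). Here one uses that $(g_k)\in\mathbb D^s(u)$ implies $(g_k)\in\mathbb D^s(|u|)$, and that $\M^*_\alpha u\not\equiv\infty$ forces $\M^*_\alpha u<\infty$ a.e. So I would fix $k\in\z$ and $x,y$ with $R:=d(x,y)$, $2^{-k-1}\le R<2^{-k}$, fix $r>0$, and treat the regimes $r\ge R$ and $r<R$ separately, replacing the single Poincar\'e inequality \eqref{Poincare} used in Theorem~\ref{sobo gradient} by its fractional counterpart \eqref{Poincare 2}; note that \eqref{Poincare 2} stays valid for every $t\ge Q/(Q+\eps)$, since raising the exponent only enlarges the $L^t$-averages on its right-hand side (and $(\M g_j^{t_0})^{1/t_0}\le(\M g_j^t)^{1/t}$ for $t\ge t_0$).

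In the regime $r\ge R$, the computation behind \eqref{iso r} gives $|u_r^\alpha(x)-u_r^\alpha(y)|\le Cr^{\alpha-1}R\,\vint{B(x,10r)}|u-u_{B(x,10r)}|\,d\mu$. Choosing $m\in\z$ with $2^{-m}$ comparable to $10r$ (whence $m\le k$, because $r\ge R$), applying \eqref{Poincare 2} at scale $2^{-m}$ and passing to maximal functions, I expect to reach
\[
|u_r^\alpha(x)-u_r^\alpha(y)|\le CR\,2^{m(1-\alpha-\eps')}\sum_{j\ge m-2}2^{-j(s-\eps')}\big(\M g_j^t(x)\big)^{1/t},
\]
where $1-\alpha-\eps'>0$ since $\eps'<s$ and $s+\alpha<1$. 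I would split this sum at $j=k$. For $j>k$, using $m\le k$, $R\approx 2^{-k}$ and the identity $-(1-s-\alpha)+(1-\alpha-\eps')=s-\eps'$, this part is at most $CR^{s+\alpha}\sum_{j\ge k-7}2^{(k-j)(s-\eps')}(\M g_j^t(x))^{1/t}$, i.e.\ the second sum defining $\tilde g_k(x)$. For $j\le k$, using $m\le j+2$ and $\delta<1-s-\alpha$, each term obeys $R\,2^{m(1-\alpha-\eps')}2^{-j(s-\eps')}\le CR^{s+\alpha}2^{(j-k)\delta}$ (which reduces to $2^{(j-k)(1-s-\alpha-\delta)}\le 1$), so this part is $\le CR^{s+\alpha}$ times the first sum defining $\tilde g_k(x)$.

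In the regime $r<R$, I would start from the splitting \eqref{sobo pieni r} and telescope each $|u_{B(x_i,3r)}-u_{B(x,9R)}|$, $i\in I_x$, along $B(x_i,3r)\subset B(x,9r)\subset B(x,2\cdot 9r)\subset\cdots\subset B(x,2^{k'}9r)$, $B(x,9R)$, where $k'$ is minimal with $2^{k'}r\ge R$; with $r\approx 2^{-m}$ one has $m\ge k$ and $k'\approx m-k$. Applying \eqref{Poincare 2} at each scale $2^l 9r\approx 2^{-(m-l)}$ and using $r^\alpha\le C2^{-k\alpha}$ (valid since $m\ge k$), the scale $n=m-l$ contributes, at gradient index $j\ge k-7$ and after the same exponent rearrangement as above, at most $C2^{-k(s+\alpha)}2^{-(n-k)\eps'}2^{(k-j)(s-\eps')}(\M g_j^t(x))^{1/t}$; since $n$ ranges over a subset of $[k-1,m-1]$, summing the geometric series $\sum_{n\ge k-1}2^{-(n-k)\eps'}$ (convergent because $\eps'>0$) bounds the total of these by $CR^{s+\alpha}$ times the second sum of $\tilde g_k(x)$. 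The terms with $j<k-7$ occur only when $n$, hence $j$, stays within a fixed distance of $k$, and each such term is $\le CR^{s+\alpha}2^{(j-k)\delta}(\M g_j^t(x))^{1/t}$, hence absorbed by the first sum of $\tilde g_k(x)$. The first link of the chain is handled like \eqref{pieni r 1} and the last like \eqref{pieni r 3}, the latter using an enlarged ball as in \eqref{pieni r 3 y} so that the same argument covers $i\in I_y$ with chains built around $y$ (producing $\tilde g_k(y)$). Since $I_x$ and $I_y$ have bounded cardinality and $0\le\varphi_i\le1$, the two regimes together give the displayed bound, hence $(C\tilde g_k)\in\mathbb D^{s+\alpha}(\M^*_\alpha u)$.

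I expect the main difficulty to be exactly this bookkeeping in the regime $r<R$: after inserting \eqref{Poincare 2} at every dyadic scale between $r$ and $R$ and summing, one must verify that every resulting term falls into one of the two geometric sums in \eqref{eq:gradient}, with a constant independent of $u$, $(g_k)$, $k$, $x$, $y$ and $r$. This is where the hypotheses are spent: $\eps<\eps'<s$ makes \eqref{Poincare 2} applicable and makes the second sum in \eqref{eq:gradient} convergent, $t\ge Q/(Q+\eps)$ is what \eqref{Poincare 2} requires, $\delta<1-s-\alpha$ lets the large-scale ($j\le k$) terms be collected into the first sum of $\tilde g_k$, and the offset ``$k-7$'' in the lower limit of the second sum records how far below the scale $2^{-k}$ the smallest dilated balls used to build $u_r$ reach.
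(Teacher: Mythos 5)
Your proposal is correct and follows essentially the same route as the paper's proof: the same two regimes $r\ge d(x,y)$ and $r<d(x,y)$, telescoping over dyadic scales combined with the Poincar\'e inequality \eqref{Poincare 2} and maximal-function domination, splitting the resulting sum over $j$ at $j=k$ so that every term lands in one of the two geometric sums of \eqref{eq:gradient}, and finally passing to $\M^*_\alpha u$ as in Theorem \ref{sobo gradient}. Your exponent bookkeeping (for instance the reduction to $2^{(j-k)(1-s-\alpha-\delta)}\le 1$ for $j\le k$ and the geometric sum in the scale parameter controlled by $\eps'>0$) matches the paper's computation.
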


\begin{proof}
Let $k\in\mathbb{Z}$ and let $x,y\in X$ such that $2^{-k-1}\le d(x,y)<2^{-k}$.
We will show that
\[
|u_r^\alpha(x)-u_r^\alpha(y)|\le Cd(x,y)^{s+\alpha}(\tilde g_k(x)+\tilde g_k(y)),
\]
where $C$ is independent of $r$ and $k$.

Assume first that $d(x,y)>r$. Then
\[
\begin{split}
|u_r(x)-u_r(y)|&\le |u_r(x)-u_{B(x,2^{-k+4})}|+|u_r(y)-u_{B(x,2^{-k+4})}|\\
&\le\sum_{i\in I_{x}}\varphi_i(x)|u_{B(x_i,3r)}-u_{B(x,2^{-k+4})}|\\
&+\sum_{i\in I_{y}}\varphi_i(y)|u_{B(x_i,3r)}-u_{B(x,2^{-k+4})}|,
\end{split}
\]
where $I_{x}$ is a set of indices $i$ for which $x$ belongs to $B(x_{i},6r)$ and $I_{y}$ the corresponding set for $y$. 
Let $m\in\mathbb{Z}$ be such that $2^{-m-1}< 9r\le 2^{-m}$. Since $r<d(x,y)<2^{-k}$, it follows that
$m\ge k-4$. 
If $i\in I_{x}$,
we obtain
\[
\begin{split}
|u_{B(x_i,3r)}-u_{B(x,2^{-k+4})}|
&\le |u_{B(x_i,3r)}-u_{B(x,2^{-m})}|+\sum_{l=k-4}^{m-1}|u_{B(x,2^{-l})}-u_{B(x,2^{-l-1})}|\\
&\le C\sum_{l=k-4}^{m}\,\vint{B(x,2^{-l})}|u-u_{B(x,2^{-l})}|\,d\mu
\end{split}
\]
and hence Poincar\'e inequality \eqref{Poincare 2} implies that
\[
\begin{split}
|u_{B(x_i,3r)}-u_{B(x,2^{-k+4})}|
&\le C\sum_{l=k-4}^\infty 2^{-l\eps'}\sum_{j=l-2}^\infty 2^{-j(s-\eps')}
        \left(\M g_j^t(x)\right)^{1/t}\\
&= C\sum_{j=k-6}^\infty 2^{-j(s-\eps')}\left(\M g_j^t(x)\right)^{1/t}\sum_{l=k-4}^{j+2}  2^{-l\eps'}\\
&\le C2^{-k\eps'}\sum_{j=k-6}^\infty 2^{-j(s-\eps')}\left(\M g_j^t(x)\right)^{1/t}\\
&= C2^{-ks}\sum_{j=k-6}^\infty 2^{(k-j)(s-\eps')}\left(\M g_j^t(x)\right)^{1/t}\\
&\le C2^{-ks}\tilde g_k(x).
\end{split}
\]
Similarly, if $i\in I_{y}$, then 
\[
\begin{split}
|u_{B(x_i,3r)}-u_{B(x,2^{-k+4})}|
&\le |u_{B(x_i,3r)}-u_{B(y,2^{-m})}|+\sum_{l=k-4}^{m-1}|u_{B(y,2^{-l})}-u_{B(y,2^{-l-1})}|\\
&+|u_{B(y,2^{-k+5})}-u_{B(x,2^{-k+4})}|\\
&\le C\sum_{l=k-5}^{m}\,\vint{B(y,2^{-l})}|u-u_{B(y,2^{-l})}|\,d\mu,
\end{split}
\]
which implies that
\[
|u_{B(x_i,3r)}-u_{B(x,2^{-k+4})}|\le C2^{-ks}\tilde g_k(y).
\]
It follows that
\[
\begin{split}
|u_r^\alpha(x)-u_r^\alpha(y)|&\le Cr^\alpha 2^{-ks}(\tilde g_k(x)+\tilde g_k(y))\\
&\le Cd(x,y)^{s+\alpha}(\tilde g_k(x)+\tilde g_k(y)).
\end{split}
\]
Suppose then that $d(x,y)\le r$. 
Let $I_{xy}$ be a set of indices $i$ for which $x$ or $y$ belongs to 
$B(x_{i},6r)$. Let $l$ be such that $2^{-l-1}<10r\le 2^{-l}$. 
Using the doubling condition, the properties of the functions $\ph_{i}$, 
the fact that there are bounded number of indices in $I_{xy}$ and 
Poincar\'e inequality \eqref{Poincare 2}, we have that
\begin{equation}\label{tl iso r1}
\begin{aligned}
 |u_r^\alpha(x)-u_r^\alpha(y)|
&\le r^\alpha\sum_{i=1}^\infty|\varphi_i(x)-\varphi_i(y))||u_{B(x_i,3r)}-u_{B(x,2^{-l})}|\\
&\le Cd(x,y)r^{\alpha-1}2^{-l\eps'}\sum_{j=l-2}^\infty 2^{-j(s-\eps')}\left(\M g_j^t(x)\right)^{1/t}.
\end{aligned}
\end{equation}
Using the assumptions $0<\delta<1-\alpha-s$, $r\ge d(x,y)$ and $d(x,y)<2^{-k}$, we have that
\[
\begin{split}
 d(x,y)r^{\alpha-1}2^{-l\eps'}
&\le Cd(x,y)r^{s+\alpha+\delta-1} \ 2^{l(s-\eps'+\delta)}
\le Cd(x,y)^{s+\alpha+\delta} \ 2^{l(s-\eps'+\delta)}\\
&\le Cd(x,y)^{\alpha+s}2^{(l-k)\delta+l(s-\eps')}.
\end{split}
\]
This together with \eqref{tl iso r1} implies that
\[
\begin{split}
|u_r^\alpha(x)-u_r^\alpha(y)|
\le \ &Cd(x,y)^{s+\alpha}
\sum_{j=l-2}^\infty 2^{(l-k)\delta+(l-j)(s-\eps')}\left(\M g_j^t(x)\right)^{1/t}.
\end{split}
\]
By splitting the sum in two parts and using the estimates $l\le j+2$ and $l\le k$, we obtain
\[
\begin{split}
& \sum_{j=l-2}^\infty 2^{(l-k)\delta+(l-j)(s-\eps')}\left(\M g_j^t(x)\right)^{1/t}\\
=\ & \sum_{j=l-2}^{k-1} 2^{(l-k)\delta+(l-j)(s-\eps')}\left(\M g_j^t(x)\right)^{1/t}
+ \sum_{j=k}^\infty 2^{(l-k)\delta+(l-j)(s-\eps')}\left(\M g_j^t(x)\right)^{1/t}\\
\le \ &C\Big(\sum_{j=-\infty}^{k-1}2^{(j-k)\delta}\left(\M g_j^t(x)\right)^{1/t} \ + \
\sum_{j=k}^{\infty}2^{(k-j)(s-\eps')}\left(\M g_j^t(x)\right)^{1/t}\Big),
\end{split}
\]
which implies the claim for $u_{r}^{\alpha}$. 
The claim for $\M^*_{\alpha} u$ follows similarly as in the proof of Theorem \ref{sobo gradient}.
\end{proof}

\begin{theorem}\label{thm: homog TL} 
Let $0<s+\alpha<1$ and $Q/(Q+s)<p,q<\infty$. 
Then there exists a constant $C>0$ such that
\[
\|\M_\alpha^* u\|_{\dot M_{p,q}^{s+\alpha}(X)}
\le C\|u\|_{\dot M_{p,q}^{s}(X)}
\]
for all $u\in \dot M_{p,q}^{s}(X)$ with $\M^*_\alpha u\not\equiv\infty$.
\end{theorem}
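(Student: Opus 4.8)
The plan is to combine Theorem~\ref{main} with the Fefferman--Stein vector-valued maximal inequality for the Hardy--Littlewood maximal operator on the doubling space $X$. First, fix $(g_k)\in\mathbb{D}^s(u)$ that is almost optimal, say $\|(g_k)\|_{L^p(X,l^q)}\le 2\|u\|_{\dot M^s_{p,q}(X)}$. Since $s>0$, both $p$ and $q$ exceed $Q/(Q+s)$; as $\eps\mapsto Q/(Q+\eps)$ is continuous, decreasing, and equals $Q/(Q+s)$ at $\eps=s$, one can choose $\eps\in(0,s)$ so close to $s$ that $Q/(Q+\eps)<\min\{p,q\}$, then $\eps'\in(\eps,s)$, then $t$ with $Q/(Q+\eps)\le t<\min\{p,q\}$, and finally $\delta\in(0,1-s-\alpha)$ (possible because $s+\alpha<1$). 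For this choice of parameters Theorem~\ref{main} shows that $(C\tilde g_k)$, with $\tilde g_k$ as in \eqref{eq:gradient}, is a fractional $(s+\alpha)$-Haj\l asz gradient of $\M^*_\alpha u$, so it suffices to prove
\[
\|(\tilde g_k)\|_{L^p(X,l^q)}\le C\|(g_k)\|_{L^p(X,l^q)}.
\]

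Write $h_j=(\M g_j^t)^{1/t}$. Then, pointwise in $x$, the sequence $(\tilde g_k)_k$ is a sum of two discrete convolutions of $(h_j)_j$ against the kernels $a_i=2^{-i\delta}\chi_{\{i\ge 0\}}$ and $b_i=2^{i(s-\eps')}\chi_{\{i\le 7\}}$, both of which belong to $l^1(\mathbb{Z})$ because $\delta>0$ and $s-\eps'>0$. Consequently $\|(\tilde g_k)_k\|_{l^q}\le C\|(h_j)_j\|_{l^q}$ for every $q\in(0,\infty)$: for $q\ge 1$ this is Young's inequality for sequences, while for $q<1$ it follows from $|\sum_i c_i|^q\le\sum_i|c_i|^q$ after summing in $k$. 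Taking $L^p$ norms and using $h_j=(\M g_j^t)^{1/t}$ together with the identity $\|(f_j)\|_{L^p(X,l^q)}=\|(f_j^t)\|_{L^{p/t}(X,l^{q/t})}^{1/t}$, one obtains
\[
\|(\tilde g_k)\|_{L^p(X,l^q)}\le C\big\|(\M g_j^t)_j\big\|_{L^{p/t}(X,l^{q/t})}^{1/t}.
\]

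Since $p/t>1$ and $q/t>1$, the Fefferman--Stein vector-valued maximal inequality on spaces of homogeneous type gives $\|(\M g_j^t)_j\|_{L^{p/t}(X,l^{q/t})}\le C\|(g_j^t)_j\|_{L^{p/t}(X,l^{q/t})}=C\|(g_j)_j\|_{L^p(X,l^q)}^t$. Combining the three displays,
\[
\|\M^*_\alpha u\|_{\dot M_{p,q}^{s+\alpha}(X)}\le\|(C\tilde g_k)\|_{L^p(X,l^q)}\le C\|(g_k)\|_{L^p(X,l^q)}\le 2C\|u\|_{\dot M_{p,q}^{s}(X)},
\]
which is the assertion. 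The only genuinely non-elementary ingredient --- and the main obstacle --- is the vector-valued maximal theorem, which here replaces the scalar $L^{p/t}$-boundedness used in Theorem~\ref{sobo norm}.a); applying it in the full range forces the choice $t<\min\{p,q\}$, hence the need to take $\eps$ (and therefore $Q/(Q+\eps)$) admissibly close to $s$, which is exactly where the hypothesis $Q/(Q+s)<p,q<\infty$ enters. Everything else reduces to the kernel-summability bookkeeping sketched above.
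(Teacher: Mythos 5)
Your proposal is correct and follows essentially the same route as the paper: apply Theorem \ref{main} with parameters chosen so that $Q/(Q+s)<t<\min\{p,q\}$, control the $l^q$-norm of $(\tilde g_k)$ by the summability of the geometric kernels (your Young's-inequality phrasing is just a repackaging of the paper's H\"older/subadditivity estimate), and conclude with the Fefferman--Stein vector-valued maximal theorem. The only differences are cosmetic (abstract versus explicit choice of $\eps,\eps',t,\delta$).
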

\begin{proof}
Let 
$\delta=\frac12(1-(s+\alpha))$, 
$\eps=\frac12\max\{s,s+\frac{Q-Qr}r\}$, $\eps'=\frac12(\eps+s)$, 
where $r=\min\{p,q\}$, and let $t=Q/(Q+\eps)$. Then $0<\eps<\eps'<s$ and $Q/(Q+s)<t<\min\{p,q\}$.  
By Theorem \ref{main}, $(C\tilde g_k)$ defined by \eqref{eq:gradient}
is a fractional $(s+\alpha)$-Haj\l asz gradient of $\M_\alpha^* u$.

It suffices to show that $(\tilde g_k)\in L^p(X,l^q)$. 
We estimate the $L^p(X,l^q)$ norm of
\[
\Big(\sum_{j=-\infty}^{k}2^{(j-k)\delta}\left(\M g_j^t\right)^{1/t}\Big)_{k\in\mathbb Z},
\]
the other part can be estimated similarly. 
If $q\ge 1$, we have, by the H\"older inequality, that
\[
\begin{split}
\sum_{k\in\mathbb Z}\Big(\sum_{j=-\infty}^{k}2^{(j-k)\delta}\big(\M g_j^t\big)^{1/t}\Big)^q
&\le C\sum_{k\in\mathbb Z}\sum_{j=-\infty}^{k}2^{(j-k)\delta}\left(\M g_j^t\right)^{q/t}\\
&\le C\sum_{j\in\mathbb Z}\left(\M g_j^t\right)^{q/t}\sum_{k=j}^{\infty}2^{(j-k)\delta}\\
&\le C\sum_{j\in\mathbb Z}\left(\M g_j^t\right)^{q/t}.
\end{split}
\]
If $q<1$, we obtain the same estimate by using the elementary inequality 
$(\sum_{j} a_j)^q\le \sum_{j} a_j^q$ for $a_{j}\ge 0$. 

By the Fefferman--Stein vector valued maximal function theorem from \cite{FS} 
(for a metric space version, see for example \cite{S} or \cite{GLY}), we obtain now
the desired estimate
\[
\begin{split}
\Big\|\Big(\sum_{j=-\infty}^{k}2^{(j-k)\delta}\left(\M g_j^t\right)^{1/t}\Big)_
  {k\in\mathbb{Z}}\Big\|_{L^p(X,\,l^q)}
&\le C\big\|\left(\M g_k^t\right)_{k\in\mathbb{Z}}\big\|_{L^{p/t}(X,\,l^{q/t})}^{1/t}\\
&\le C\|(g_k^t)_{k\in\mathbb{Z}}\|_{L^{p/t}(X,\,l^{q/t})}^{1/t}\\
&=C\|(g_k)_{k\in\mathbb{Z}}\|_{L^{p}(X,\,l^{q})}.
\end{split}
\]
\end{proof}

\begin{theorem}\label{thm: homog Besov}
Let $0<s+\alpha<1$, $Q/(Q+s)<p<\infty$ and $0<q<\infty$. 
Then there exists a constant $C>0$ such that
\[
\|\M_\alpha^* u\|_{\dot N_{p,q}^{s+\alpha}(X)}\le C\|u\|_{\dot N_{p,q}^{s}(X)}
\]
for all $u\in \dot N_{p,q}^{s}(X)$ with $\M^*_\alpha u\not\equiv\infty$.
\end{theorem}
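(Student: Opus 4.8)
The plan is to run the argument of Theorem \ref{thm: homog TL} almost verbatim, with the sequence space $L^p(X,l^q)$ replaced by $l^q(L^p(X))$; in this iterated order the Fefferman--Stein vector-valued inequality is no longer needed, because the $L^p$-norm is taken before the $l^q$-norm, so one may estimate the $L^p$-norm of each term of the defining gradient separately, and for that only the ordinary boundedness of the Hardy--Littlewood maximal operator $\M$ on $L^{p/t}(X)$, $p/t>1$, is required.

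First I would fix the auxiliary parameters as in Theorem \ref{thm: homog TL}, but with the role of $\min\{p,q\}$ played by $p$: set $\delta=\tfrac12(1-(s+\alpha))$, $\eps=\tfrac12\max\{s,\,s+\tfrac{Q-Qp}{p}\}$, $\eps'=\tfrac12(\eps+s)$ and $t=Q/(Q+\eps)$. Using the hypothesis $p>Q/(Q+s)$ one checks that $0<s+\alpha<1$, $0<\delta<1-s-\alpha$, $0<\eps<\eps'<s$ and $Q/(Q+s)<t<p$, so all the hypotheses of Theorem \ref{main} hold. Theorem \ref{main} then supplies a fractional $(s+\alpha)$-Haj\l asz gradient $(C\tilde g_k)$ of $\M_\alpha^* u$, with $\tilde g_k$ as in \eqref{eq:gradient}, and it remains only to prove the bound $\|(\tilde g_k)\|_{l^q(L^p(X))}\le C\|(g_k)\|_{l^q(L^p(X))}$ and to pass to the infimum over $(g_k)\in\mathbb D^s(u)$. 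To prove that bound I would split $\tilde g_k=\tilde g_k^{(1)}+\tilde g_k^{(2)}$ into its two sums and treat each separately (both are geometrically decaying in $j$, since $\delta>0$ and $s-\eps'>0$). Writing $a_j=\|g_j\|_{L^p(X)}$, the $L^{p/t}$-boundedness of $\M$ gives $\|(\M g_j^t)^{1/t}\|_{L^p(X)}\le Ca_j$; then, using Minkowski's inequality when $p\ge1$ and the inequality $\|\sum_j f_j\|_{L^p}^p\le\sum_j\|f_j\|_{L^p}^p$ when $p<1$, the relevant $l^q$-quantity for $(\|\tilde g_k^{(i)}\|_{L^p(X)})_k$ reduces to a discrete convolution of $(a_j)$, or of $(a_j^p)$, against an $l^1$-sequence. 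A routine application of Young's inequality (for $q\ge1$, resp.\ $q\ge p$) or of the elementary subadditivity $(\sum_j b_j)^\theta\le\sum_j b_j^\theta$ for $0<\theta\le1$ (otherwise) then yields $\|(\tilde g_k^{(i)})\|_{l^q(L^p(X))}\le C\|(g_j)\|_{l^q(L^p(X))}$; adding the two bounds finishes the proof.

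I do not anticipate a real obstacle: this is a direct transcription of the Triebel--Lizorkin case into the simpler iterated-norm setting. The two points that require a little care are verifying that the chosen exponent $t$ lies in the window $Q/(Q+s)<t<p$, so that both Theorem \ref{main} and the $L^{p/t}$-boundedness of $\M$ apply, and keeping track of the (quasi-)norm constants in the elementary $l^q$- and $L^p$-inequalities when $q<1$ or $p<1$, which is entirely routine.
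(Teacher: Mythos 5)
Your proposal is correct and follows essentially the same route as the paper's proof: the same choice of parameters $\delta,\eps,\eps',t$ (with $p$ playing the role of $\min\{p,q\}$), Theorem \ref{main} to produce the fractional $(s+\alpha)$-gradient, the Hardy--Littlewood maximal theorem on $L^{p/t}(X)$ with $p/t>1$, and then elementary geometric-decay convolution estimates (H\"older/Young for $q\ge1$, subadditivity $(\sum_j a_j)^\theta\le\sum_j a_j^\theta$ otherwise) in the $l^q(L^p)$ norm, with no Fefferman--Stein inequality needed. If anything, your explicit use of $\|\sum_j f_j\|_{L^p}^p\le\sum_j\|f_j\|_{L^p}^p$ when $p<1$ is slightly more careful than the paper, which simply applies the triangle inequality in $L^p$ at that step.
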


\begin{proof}
Let 
$\delta=\frac12(1-(s+\alpha))$, 
$\eps=\frac12\max\{s,s+\frac{Q-Qp}p\}$, $\eps'=\frac12(\eps+s)$, 
and let $t=Q/(Q+\eps)$. Then $0<\eps<\eps'<s$ and $Q/(Q+s)<t<p$.  
Then $(C\tilde g_k)$ defined by \eqref{eq:gradient}
is a fractional $(s+\alpha)$-Haj\l asz gradient of $\M_\alpha^* u$ by Theorem \ref{main}.

It suffices to show that $\|(\tilde g_k)\|_{l^q(L^p(X))}\le C\|(g_k)\|_{l^q(L^p(X))}$. 
By the Hardy--Littlewood maximal theorem,
\[
\begin{split}
\Big\|\sum_{j=-\infty}^{k}2^{(j-k)\delta}\left(\M g_j^t\right)^{1/t}\Big\|_{L^p(X)}
&\le \sum_{j=-\infty}^{k}2^{(j-k)\delta}\big\|\left(\M g_j^t\right)^{1/t}\big\|_{L^p(X)}\\
&\le \sum_{j=-\infty}^{k}2^{(j-k)\delta}\|g_j\|_{L^p(X)}.
\end{split}
\]
If $q\ge 1$, we have by the H\"older inequality,
\[
\begin{split}
\sum_{k\in\mathbb Z}\Big(\sum_{j=-\infty}^{k}2^{(j-k)\delta}\|g_j\|_{L^p(X)}\Big)^q
&\le C\sum_{k\in\mathbb Z}\sum_{j=-\infty}^{k}2^{(j-k)\delta}\|g_j\|_{L^p(X)}^q\\
&\le C\sum_{j\in\mathbb Z}\|g_j\|_{L^p(X)}^q\sum_{k=j}^{\infty}2^{(j-k)\delta}\\
&\le C\sum_{j\in\mathbb Z}\|g_j\|_{L^p(X)}^q.
\end{split}
\]
If $q<1$, we use the inequality 
$(\sum_{j} a_j)^q\le \sum_{j} a_j^q$ instead of the Hölder inequality.
The second part of $(\tilde g_k)$ can be estimated similarly. 
\end{proof}

Theorems \ref{thm: homog TL}, \ref{thm: homog Besov} and the Hardy--Littlewood maximal theorem
imply the following results for the discrete maximal operator.

\begin{theorem}\label{thm: TL} 
Let $0<s<1$. 
\begin{itemize}
\item[a)] If $Q/(Q+s)<p,q<\infty$, then there exist a constant $C>0$ such that
\[
\|\M^* u\|_{\dot M_{p,q}^{s}(X)}\le C\|u\|_{\dot M_{p,q}^{s}(X)},
\]
whenever $u\in \dot M_{p,q}^{s}(X)$ and $\M^* u\not\equiv\infty$.
\item[b)] If $1<p,q<\infty$, then there exist a constant $C>0$ such that
\[
\|\M^* u\|_{M_{p,q}^{s}(X)}\le C\|u\|_{M_{p,q}^{s}(X)},
\]
for all $u\in M_{p,q}^{s}(X)$.
\end{itemize}
\end{theorem}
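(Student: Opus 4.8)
The plan is to obtain Theorem~\ref{thm: TL} as a direct consequence of the homogeneous bound in Theorem~\ref{thm: homog TL}, combined with the pointwise comparability of the discrete maximal function $\M^{*}$ with the ordinary Hardy--Littlewood maximal function $\M$ and the classical Hardy--Littlewood maximal theorem in doubling spaces.

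For part~(a), I would simply observe that $\M^{*}=\M^{*}_{0}$, so that the assertion is exactly the case $\alpha=0$ of Theorem~\ref{thm: homog TL}: the condition $0<s+\alpha<1$ there becomes the hypothesis $0<s<1$, and the parameter range $Q/(Q+s)<p,q<\infty$ is unchanged. Thus $\|\M^{*}u\|_{\dot M_{p,q}^{s}(X)}\le C\|u\|_{\dot M_{p,q}^{s}(X)}$ for every $u\in\dot M_{p,q}^{s}(X)$ with $\M^{*}u\not\equiv\infty$, which is precisely~(a); no additional work is required.

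For part~(b), I would split the inhomogeneous norm as $\|\M^{*}u\|_{M_{p,q}^{s}(X)}=\|\M^{*}u\|_{L^{p}(X)}+\|\M^{*}u\|_{\dot M_{p,q}^{s}(X)}$ and treat the two terms separately. Since $1<p,q<\infty$ in particular gives $Q/(Q+s)<p,q<\infty$, and since $u\in M_{p,q}^{s}(X)\subset L^{p}(X)$ forces $\M^{*}u\le C\,\M u<\infty$ a.e.\ (the comparability recalled in Section~2 together with the maximal theorem, using $p>1$), part~(a) applies and bounds the homogeneous seminorm by $C\|u\|_{\dot M_{p,q}^{s}(X)}\le C\|u\|_{M_{p,q}^{s}(X)}$. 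For the $L^{p}$ term, the same comparability gives $\|\M^{*}u\|_{L^{p}(X)}\le C\|\M u\|_{L^{p}(X)}$, and since $p>1$ the Hardy--Littlewood maximal theorem yields $\|\M u\|_{L^{p}(X)}\le C\|u\|_{L^{p}(X)}\le C\|u\|_{M_{p,q}^{s}(X)}$. Adding the two estimates proves~(b).

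There is no real obstacle here, as the statement is a corollary of Theorem~\ref{thm: homog TL}. The only points that deserve a line of justification are the verification that the hypotheses of Theorem~\ref{thm: homog TL} hold in the stated ranges of $p$ and $q$, the a.e.\ finiteness of $\M^{*}u$ needed to invoke~(a) inside the proof of~(b), and the precise statements of the comparability $\M^{*}u\le C\,\M u$ and of the Hardy--Littlewood maximal theorem in the doubling metric setting.
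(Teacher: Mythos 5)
Your proposal is correct and follows essentially the same route as the paper, which gives no separate proof but states that Theorems \ref{thm: homog TL}, \ref{thm: homog Besov} and the Hardy--Littlewood maximal theorem imply these results: part (a) is the case $\alpha=0$ of Theorem \ref{thm: homog TL}, and part (b) adds the $L^p$ bound for $\M^*u$ via comparability with $\M$ and the maximal theorem for $p>1$. Your extra remarks (that $1<p,q$ implies $Q/(Q+s)<p,q$ and that $u\in L^p$ with $p>1$ guarantees $\M^*u\not\equiv\infty$) are exactly the small verifications implicit in the paper's one-line deduction.
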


\begin{theorem}\label{thm: Besov}
Let $0<s<1$.
\begin{itemize}
\item[a)] If $Q/(Q+s)<p<\infty$ and $0<q<\infty$, there exist a constant $C>0$ such that
\[
\|\M^* u\|_{\dot N_{p,q}^{s}(X)}\le C\|u\|_{\dot N_{p,q}^{s}(X)}
\]
for all $u\in \dot N_{p,q}^{s}(X)$ with $\M^* u\not\equiv\infty$.
\item[b)]  If $1<p<\infty$ and $0<q<\infty$, there exist a constant $C>0$ such that
\[
\|\M^* u\|_{N_{p,q}^{s}(X)}\le C\|u\|_{ N_{p,q}^{s}(X)}
\]
for all $u\in N_{p,q}^{s}(X)$.
\end{itemize}
\end{theorem}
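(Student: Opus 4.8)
The plan is to obtain both parts directly from Theorem~\ref{thm: homog Besov}, applied with $\alpha=0$, together with the $L^p$-boundedness of the Hardy--Littlewood maximal operator for $p>1$. Recall that $\M^*=\M_0^*$, so setting $\alpha=0$ in Theorem~\ref{thm: homog Besov} is the natural move; its hypotheses $0<s+\alpha<1$, $Q/(Q+s)<p<\infty$, $0<q<\infty$ then become precisely the hypotheses $0<s<1$, $Q/(Q+s)<p<\infty$, $0<q<\infty$ appearing here.

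For part (a), I would simply invoke Theorem~\ref{thm: homog Besov} with $\alpha=0$: since $\M^*u\not\equiv\infty$ by assumption, it gives
\[
\|\M^*u\|_{\dot N_{p,q}^s(X)}\le C\|u\|_{\dot N_{p,q}^s(X)},
\]
which is the assertion.

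For part (b), I would write out the inhomogeneous norm, $\|v\|_{N_{p,q}^s(X)}=\|v\|_{L^p(X)}+\|v\|_{\dot N_{p,q}^s(X)}$ with $v=\M^*u$, and estimate the two summands separately. Because $u\in N_{p,q}^s(X)\subset L^p(X)$ and $p>1$, the function $\M u$ is finite almost everywhere; since $\M^*u$ is pointwise comparable to $\M u$ (as recalled in the preliminaries, cf.\ \cite{HKNT}), we have $\M^*u\not\equiv\infty$, so part (a) applies and yields $\|\M^*u\|_{\dot N_{p,q}^s(X)}\le C\|u\|_{\dot N_{p,q}^s(X)}\le C\|u\|_{N_{p,q}^s(X)}$. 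For the $L^p$-term, the same comparability and the Hardy--Littlewood maximal theorem give $\|\M^*u\|_{L^p(X)}\le C\|\M u\|_{L^p(X)}\le C\|u\|_{L^p(X)}\le C\|u\|_{N_{p,q}^s(X)}$. Adding the two bounds finishes the proof.

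I do not expect any genuinely hard step: the statement is a corollary of the homogeneous Besov estimate of Theorem~\ref{thm: homog Besov}, and the only point deserving a line of care is that the side hypothesis $\M^*u\not\equiv\infty$ needed for part (a) is automatic in (b), since $u\in L^p(X)$ with $p>1$.
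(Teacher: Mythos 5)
Your proposal is correct and follows essentially the same route as the paper, which derives Theorem \ref{thm: Besov} directly from Theorem \ref{thm: homog Besov} (with $\alpha=0$) together with the Hardy--Littlewood maximal theorem for the $L^p$-part of the inhomogeneous norm. Your extra remark that $\M^*u\not\equiv\infty$ is automatic in part (b), via the comparability of $\M^*$ and $\M$ and the $L^p$-boundedness of $\M$ for $p>1$, is a correct and welcome explicit justification of a point the paper leaves implicit.
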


\vspace{0.5cm}
\noindent
\small{\textsc{T.H.},}
\small{\textsc{Department of Mathematics},}
\small{\textsc{P.O. Box 11100},}
\small{\textsc{FI-00076 Aalto University},}
\small{\textsc{Finland}}\\
\footnotesize{\texttt{toni.heikkinen@aalto.fi}}

\vspace{0.3cm}
\noindent
\small{\textsc{H.T.},}
\small{\textsc{Department of Mathematics and Statistics},}
\small{\textsc{P.O. Box 35},}
\small{\textsc{FI-40014 University of Jyv\"askyl\"a},}
\small{\textsc{Finland}}\\
\footnotesize{\texttt{heli.m.tuominen@jyu.fi}}


\begin{thebibliography}{000}
\bibitem{AK}
D. Aalto and J. Kinnunen, 
The discrete maximal operator in metric spaces,
J. Anal. Math. 111 (2010), 369--390.
\bibitem{AP} 
J.  M.  Aldaz  and  J.  Pérez  Lázaro,  
Functions  of  bounded  variation,  the  derivative of  the  one  dimensional  maximal  function,  and  applications  to  inequalities,
Trans. Amer. Math. Soc. 359 (2007), 2443-2461.
\bibitem{B}
S.M. Buckley, 
Is the maximal function of a Lipschitz function continuous?,
Ann. Acad. Sci. Fenn. Math. 24 (1999), 519--528.
\bibitem{EKM}
D. Edmunds, V. Kokilashvili, A. Meskhi, 
Bounded and Compact Integral Operators, 
Mathematics and its Applications, vol. 543, Kluwer Academic Publishers, Dordrecht, Boston, London, 2002.
\bibitem{FS}
C. Fefferman and E. M. Stein, 
Some maximal inequalities,
Amer. J. Math. 93 (1971), 107--115.
\bibitem{GGKK}
I. Genebashvili, A. Gogatishvili, V. Kokilashvili and M. Krbec,
Weight Theory for Integral Transforms on Spaces of Homogeneous Type,
Addison Wesley Longman Limited, 1998.
\bibitem{GKZ} A. Gogatishvili, P. Koskela and Y. Zhou,
Characterizations of Besov and Triebel--Lizorkin Spaces on
Metric Measure Spaces, to appear in Forum math.
\bibitem{GLY}
L. Grafakos, L. Liu and D. Yang,
Vector-valued singular integrals and maximal functions on spaces of homogeneous type, 
Math. Scand. 104 (2009), 296--310.
\bibitem{H} 
P. Haj\l asz, Sobolev spaces on an arbitrary
metric space, Potential Anal. 5 (1996), 403--415.
\bibitem{H2} 
P. Hajlasz, 
Sobolev spaces on metric-measure spaces,
(Heat kernels and analysis on manifolds, graphs, and metric spaces (Paris, 2002)), 173--218, 
Contemp. Math., 338, Amer. Math. Soc., Providence, RI, 2003.
\bibitem{HM}   
P. Haj\l asz and J. Maly,  
On approximate differentiability of the maximal function, 
Proc. of AMS., 138 (2010), no. 1, 165--174.
\bibitem{HO} 
P. Haj\l asz and J. Onninen, 
On boundedness of maximal functions in Sobolev spaces,
Ann. Acad. Sci. Fenn. Math. 29 (2004), 167--176.  
\bibitem{HKNT} 
T. Heikkinen, J. Kinnunen, J. Nuutinen and H. Tuominen,
Mapping properties of the discrete fractional maximal operator in metric measure spaces,
to appear in Kyoto J. Math.
\bibitem{HKT} 
T. Heikkinen, P. Koskela and H. Tuominen,
Sobolev-type spaces from generalized Poincaré inequalities,
Studia Math. 181 (2007), 1--16.
\bibitem{HLNT} 
T. Heikkinen, J. Lehrb\"ack, J. Nuutinen and H. Tuominen,
Regularity of the fractional maximal function in metric measure spaces,
preprint.
\bibitem{Hu}
J. Hu, 
A note on Haj\l asz-Sobolev spaces on fractals, 
J. Math. Anal. Appl. 280 (2003), 91-101.
\bibitem{K}  
J. Kinnunen, 
The Hardy-Littlewood maximal function of a Sobolev-function, 
Israel J.Math. 100 (1997), 117-124. 
\bibitem{KL}J. Kinnunen and V. Latvala, 
Lebesgue points for Sobolev functions on metric spaces,
Rev. Mat. Iberoamericana 18 (2002), no. 3, 685--700.
\bibitem{KLi}  
J.  Kinnunen  and  P.  Lindqvist,   
The  derivative  of  the  maximal  function, 
J. Reine Angew. Math. 503 (1998), 161-167. 
\bibitem{KS}
J. Kinnunen and E. Saksman,
Regularity of the fractional maximal function,
Bull. London Math. Soc. 35 (2003), no. 4, 529--535.
\bibitem{KT}
J. Kinnunen and H. Tuominen,
Pointwise behaviour of $M\sp {1,1}$ Sobolev functions,
Math. Z. 257 (2007), no. 3, 613--630.
\bibitem{Ko2} S. Korry,
A class of bounded operators on Sobolev spaces, 
Arch. Math. (Basel) 82 (2004), no. 1, 40--50. 
\bibitem{Ko3}
S. Korry, 
Boundedness of Hardy-Littlewood maximal operator in the framework of Lizorkin-Triebel spaces, 
Rev. Mat. Complut. 15 (2002), no. 2, 401--416. 
\bibitem{KYZ}
P. Koskela, D. Yang and Y. Zhou,
Pointwise Characterizations of Besov and Triebel-Lizorkin
Spaces and Quasiconformal Mappings,
Adv. Math. 226 (2011), no. 4, 3579--3621.
\bibitem{L}  
H. Luiro, 
Continuity of the Hardy-Littlewood maximal operator in Sobolev spaces, 
Proc. of AMS., 135 (2007), no. 1, 243--251. 
\bibitem{L2}  
H.  Luiro,  
On  the  regularity  of  the  Hardy-Littlewood  maximal  operator  on  subdomains of $\rn$,
Proc. Edinb. Math. Soc. (2) 53 (2010), no. 1, 211--237. 
\bibitem{S} 
Y. Sawano, 
Sharp estimates of the modified Hardy-Littlewood maximal operator on the nonhomogeneous space via covering lemmas, 
Hokkaido Math. J. 34 (2005), no. 2, 435--458.
\bibitem{T} 
H. Tanaka, 
A remark on the derivative of the one-dimensional Hardy-Littlewood maximal function,
Bull. Austral. Math. Soc. 65 (2002), 253--258.
\bibitem{Tr} 
H. Triebel, 
Theory of Function Spaces, 
Birkh\"auser Verlag, Basel, 1983.
\bibitem{Y}
D. Yang, New characterizations of Haj\l asz-Sobolev
spaces on metric spaces, Sci. China Ser. A 46 (2003), 675-689.
\bibitem{Y2}
D. Yang, 
Riesz Potentials in Besov and Triebel--Lizorkin Spaces over Spaces of Homogeneous Type, 
Potential Anal. 19 (2003), 193--210.
\end{thebibliography}
\end{document}